\newtheorem{thm}{Theorem}
\newtheorem{lemma}{Lemma}
\newtheorem{corollary}{Corollary}
\newtheorem*{remark}{Remark}
\crefname{equation}{}{}
\patchcmd{\numparts}{\addtocounter{equation}{1}}{\refstepcounter{equation}}{}{}
\newcommand{\N}{\mathbb{N}}
\newcommand{\R}{\mathbb{R}}
\newcommand{\dx}[1][x]{\ensuremath{\ {\mathrm{d}} #1}}
\newcommand{\ds}[1][s]{\ensuremath{\ {\mathrm{d}} #1}}
\theoremstyle{definition}
\newtheorem{exmp}{Example}%[section]
\begin{document}

\title[On the required number of electrodes for an inverse coefficient problem]{On the required number of electrodes for uniqueness and convex reformulation in an inverse coefficient problem}

\author{Andrej Brojatsch \& Bastian Harrach}

\address{Institute of Mathematics, Goethe-University Frankfurt, Frankfurt am Main, Germany}
\ead{brojatsch@math.uni-frankfurt.de, harrach@math.uni-frankfurt.de}
\vspace{10pt}
\begin{indented}
\item[]October 2024 %(minor update March 2024)
\end{indented}

\begin{abstract}
We introduce a computer-assisted proof for the required number of electrodes for uniqueness and global reconstruction for the inverse Robin transmission problem, where the corrosion function on the boundary of an interior object is to be determined from electrode current-voltage measurements. We consider the shunt electrode model where, in contrast to the standard Neumann boundary condition, the applied electrical current is only partially known. The aim is to determine the corrosion coefficient with a finite number of measurements.

In this paper, we present a numerically verifiable criterion that ensures unique solvability of the inverse problem, given a desired resolution. This allows us to explicitly determine the required number and position of the electrodes. Furthermore, we will present an error estimate for noisy data. By rewriting the problem as a convex optimization problem, our aim is to develop a globally convergent reconstruction algorithm. %The results extend the recent research in \cite{harrach2022solving} to the shunt electrode model and establish a more precise criterion for unique solvability.
\end{abstract}

%
% Uncomment for keywords
\vspace{2pc}
\noindent{\it Keywords}: Inverse problems, shunt electrode model, global convergence
%
% Uncomment for Submitted to journal title message
%\submitto{\JPA}
%
% Uncomment if a separate title page is required
%\maketitle
% 
% For two-column output uncomment the next line and choose [10pt] rather than [12pt] in the \documentclass declaration
%\ioptwocol
%

\section{Introduction}

In this study, we address the problem of non-destructive impedance-based corrosion detection, which aims to reconstruct an unknown Robin transmission coefficient on a known interior boundary from current-voltage measurements taken at electrodes attached to the outer boundary.

To the best knowledge of the authors, this work presents the first method for explicitly calculating the required number of electrodes for uniqueness and a global reconstruction guarantee in a nonlinear spatially dependent coefficient reconstruction problem. Furthermore we are able to numerically determine an upper bound for the inverse stability constant and hereby provide error estimates to the solution.

Traditionally, practitioners approach such reconstruction problems using regularized data fitting methods. The non-convexity of the residual function presents a significant challenge for nonlinear inverse problems, which is why, in general, only a few global reconstruction algorithms are known. Previous works \cite{harrach2022solving, Harrach_2020uniqueness} introduced a novel approach to overcoming non-convexity and ensuring global convergence in an idealized setting with standard Neumann boundary conditions. In the present paper, we translate these results to the shunt electrode model, which extends the standard Neumann boundary problem to a realistic electrode model, where the applied electrical current is only partially known.  We establish a criterion for determining whether a given number of electrodes is sufficient for unique solvability of the inverse problem and provide an equivalent reformulation by rewriting the problem as a semidefinite program. Our results explicitly characterize the minimum number of electrodes required to guarantee the reconstruction of an unknown Robin parameter with a given resolution.

By evaluating a finite number of forward solutions, and their derivatives, the criterion can be numerically verified for a given desired resolution, so the proof for uniqueness and global reconstruction is computer-assisted. Furthermore, our criterion provides explicit error estimates for noisy data. We also refine previous uniqueness results \cite{harrach2022solving} to obtain sharper stability assumptions.

This work contributes to the broader framework of the Calderón problem \cite{calderon2006inverse}, a classical inverse problem concerned with reconstructing information about the interior of a domain information from measurements collected at its boundary. Electrical impedance tomography (EIT) is a well-known application of this problem \cite{adler2021electrical}. In EIT, one seeks to reconstructs material properties from a finite number of current-voltage measurements. While global reconstruction results are well-established for the case of infinitely many measurements \cite{caro2016global, krupchyk2016calderon}, results for the practically relevant scenario of finitely many measurements remain limited \cite{alberti2019calderon, alessandrini2005lipschitz, harrach2019uniqueness, harrach2023calderon}. The authors of \cite{ruland2018lipschitz} address the fractional case for finitely many measurements. Nevertheless, these findings are of theoretical nature and are not directly applicable in practical settings. 

The Robin problem considered in \cite{harrach2022solving} is a special case where reconstruction is restricted to a small subdomain. Classical theoretical tools, such as Runge approximation and localized potentials, hold in general for this case \cite{harrach2019global}. Previous uniqueness results for the Robin case primarily focus on the infinite-dimensional setting, assuming infinite resolution and infinitely many measurements \cite{harrach2019global}. In contrast, \cite{ Harrach_2020uniqueness,harrach2022solving} address the practical scenario of a finite resolution and finitely many measurements, where a measurement corresponds to measuring the exact Dirichlet data from applied Neumann boundary values. In \cite{harrach2019global}, the authors have established Lipschitz stability results for cases involving finitely many unknowns and infinitely many measurements.

In this work, we consider the Robin problem under a realistic electrode model, where electrodes are attached to the domain boundary \cite{cheney1999electrical}. The most common electrode model is the complete electrode model. However, we focus on the shunt model, which simplifies the setup by neglecting electrode-body impedance.  
Foundational theoretical work on electrode models can be found in \cite{ hanke2011justification, harrach2015interpolation, hyvonen2017smoothened, somersalo1992existence}. 
Theoretical attempts to approximate idealized models using electrode models have been made in \cite{harrach2019uniqueness, hauptmann2017approximation, hyvonen2004complete, hyvonen2009approximating}. However, in EIT little is known about characterizing the number of electrodes required to achieve a desired resolution.

By extending the theoretical results from \cite{harrach2022solving} to an electrode model, we exploit the fact that the Neumann-Dirichlet operator already has a finite structure. The theoretical result \cite[Theorem 1]{harrach2022solving} translates into an explicit criterion that can be numerically verified by computing a constant $\lambda$ that was found to be the inverse stability constant. Due to the inherent ill-posedness of the nonlinear inverse problem, $1/\lambda$ is expected to grow exponentially as stated in \cite{alberti2022infinite}. Our numerical experiments in Section \ref{sec:experiments} confirm that $\lambda$ reaches machine precision even for low resolutions. Nevertheless, we observe stable and global convergence with our equivalent reformulation. Additionally, we refine the uniqueness proof from \cite[Theorem 1]{harrach2022solving} to improve the stability constant $\lambda$.

\begin{figure}[h]
%\capstarttrue
\begin{center}
\includegraphics[width=0.5\linewidth]{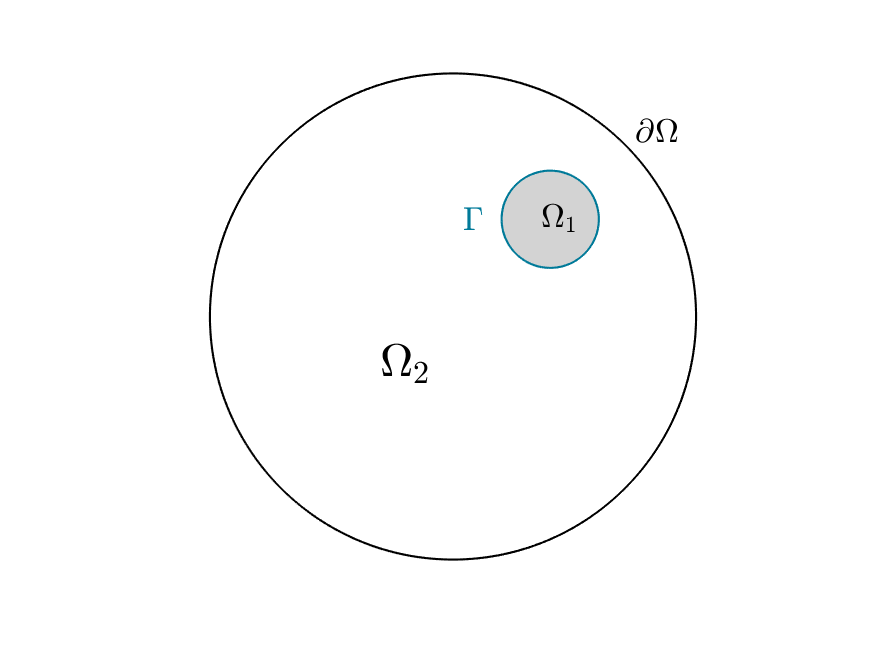}\\
\caption{The domain $\Omega=\Omega_1 \cup \Omega_2$.}\label{fig1}
\end{center}
\end{figure}

\section{Main results}
\label{sec:main}

Let $\Omega\subset \R^d$ be an open, bounded domain with Lipschitz boundary $\partial \Omega$, $\Omega_1$ be an open subset of $\Omega$ with Lipschitz boundary $\Gamma:= \partial \Omega_1$, such that $\Gamma$ is entirely contained within $\Omega$. Define  $\Omega_2:=\Omega \setminus \Omega_1$, assuming $\Omega_2$ is connected (see Figure \ref{fig1}). Additionally, let the electrodes $\mathcal{E}_1,\ldots \mathcal{E}_m$ be measurable subsets of $ \partial \Omega$ with nonzero measure.  

Given a piecewise-constant conductivity, defined as $\sigma=\sigma_1\chi_{\Omega_1}+\sigma_2\chi_{\Omega_2}$, where $\sigma_1,\sigma_2>0$, and a Robin transmission coefficient $\gamma \in L^\infty_+(\Gamma)$, we consider the following problem with applied currents $I_k\in \R$ for $k=1,\ldots,m$
\begin{equation}\label{robineqn}
 \left\{
 \begin{array}{rcll}
-\nabla \cdot \left( \sigma \nabla u \right) &=& 0  &\ \ \textrm{on} \ \ \Omega_1 \cup \Omega_2, \\
\int_{\mathcal{E}_k} \sigma \partial_\nu u \, \mathrm{d}s &=& I_k  &\ \ \textrm{for} \ \ k = 1, \ldots, m, \\
\sigma \partial_\nu u &=& 0 &\ \ \textrm{on} \ \ \partial \Omega \setminus \bigcup_{k=1,\ldots,m} \mathcal{E}_k, \\
\left\llbracket u \right\rrbracket &=& 0 &\ \ \textrm{on} \ \ \Gamma, \\
\left\llbracket \sigma \partial_\nu u \right\rrbracket &=& \gamma u &\ \ \textrm{on} \ \ \Gamma, \\
u \big|_{\mathcal{E}_k} &=& \textrm{const.} =: U_k &\ \ \textrm{for} \ \ k=1,\ldots,m,
\end{array}
 \right.
 \end{equation}
where $\nu$ is the unit normal vector on $\Gamma$ or $\partial \Omega$ pointing outward of $\Omega_1$ or $\Omega$,
$\llbracket \varphi \rrbracket  := \textrm{trace}\left(\varphi \big|_{\Omega_2}\right)-\textrm{trace}\left(\varphi \big|_{\Omega_1}\right)$and $\llbracket \sigma \partial_\nu \varphi \rrbracket_\Gamma = \partial_\nu \left(\sigma_2 \varphi\big|_{\Omega_2}\right)-\partial_\nu \left( \sigma_1\varphi\big|_{ \Omega_1}\right)$. 

A known current $I_k$ is applied to the electrode $\mathcal{E}_k$ for $k=1,...,m$, and the necessary  voltages $U_k$ are measured on the same electrodes. This setup is referred to as the shunt model. In contrast to the complete electrode model, the shunt model represents an idealized scenario where perfect conduction between the body and the electrodes is assumed. The corresponding variational formulation of the shunt electrode model is given by
\begin{equation*}\label{variational}
b(u,v):=\int_\Omega  \nabla u \nabla v  \dx + \int_\Gamma \gamma u~ v \ds = \sum_{k=1}^m I_k \left( v \big|_{\mathcal{E}_k} \right) \qquad \textrm{for all}~ v \in H_{\square}^1(\Omega),
\end{equation*}
with $H_\square^1(\Omega) = \lbrace v \in H^1(\Omega):~  v \big|_{\mathcal{E}_k} = \textrm{const.} \quad \textrm{for}~  k=1,\ldots,m \rbrace$.  Since $b$ is continuous and coercive and $H_\square^1(\Omega)$ is a closed subspace of $H^1(\Omega)$ the non-linear partial differential Equation \cref{robineqn} is uniquely solvable for all $\gamma\in L^\infty_+(\Gamma)$ by Lax--Milgram Theorem (set forth in \cite{somersalo1992existence}). For a piecewise constant coefficient function $\gamma \in L^\infty_+(\Gamma)$ on a partition $\Gamma_1,\ldots,\Gamma_n$, $n\geq 2$, where
\[ \Gamma_1,\ldots,\Gamma_n, ~ \textrm{are pairwise disjoint, measurable and}~ \bigcup\limits_{j=1}^n\Gamma_j=\Gamma,\] 
one can define a forward map  $\mathcal{F}$ to model the measurements. $\mathcal{F}\colon \R^n_+ \to \mathbb{S}^m_+ $ is given by
\begin{equation}\label{NtD}
\mathcal{F}(\gamma)I = U, \quad \textrm{where} ~u_\gamma^{(I)}~ \textrm{solves \cref{robineqn} and $u_\gamma^{(I)}\big|_{\mathcal{E}_j}= U_j$ for $j=1,\ldots m$ }. 
\end{equation}
Here $\mathbb{S}^m_+ $ denotes the space of the symmetric positive definite matrices and we identify the vector $\gamma\in \R^n_+$ with the piecewise constant function $\left(\sum\limits_{j=1}^n \gamma_j \chi_{\Gamma_j} \right)\in L^\infty_+(\Gamma)$. Note that $\mathcal{F}(\gamma) \in \mathbb{S}^m_+ $ since
\[ I^T \mathcal{F}(\gamma)J =b(u_\gamma^{(I)},u_\gamma^{(J)}) \qquad \textrm{for all}~ I,J \in  \R^m \]
and $b$ is symmetric and coercive. 
\begin{figure}[h]
\begin{center}
\includegraphics[width=0.5\linewidth]{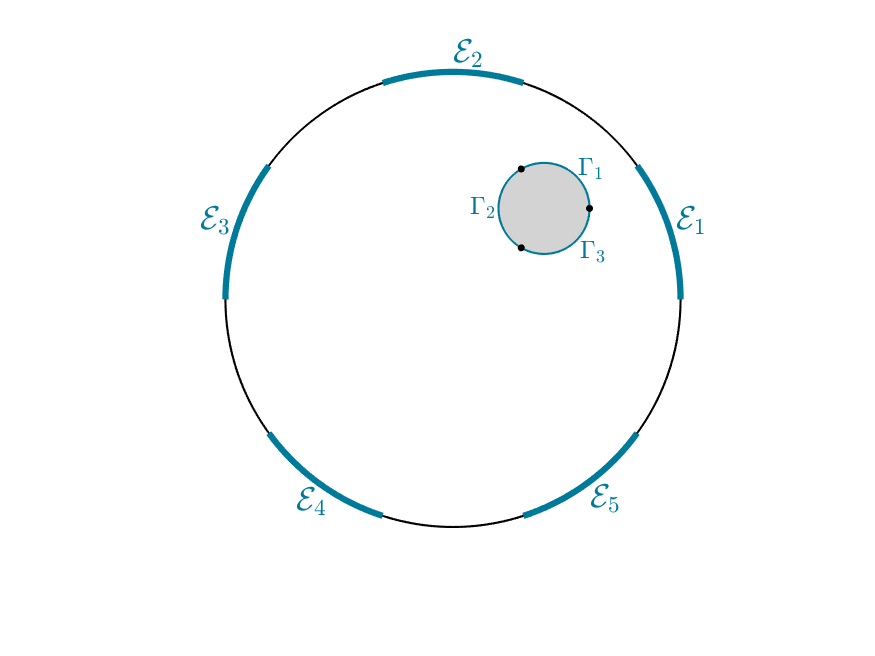}\\
\caption{Resolution dimension $n=3$ and $m=5$ electrodes.}\label{fig44}
\end{center}
\end{figure}

The physical interpretation of $\mathcal{F}(\gamma)$ is that it represents the measured voltage $U\in \R^m$ at the electrodes in response to the applied currents $I\in \R^m$. Specifically, the $(jk)$-th component of $\mathcal{F}(\gamma) $ corresponds to the voltage measured at electrode $\mathcal{E}_j$ when a current is applied to electrode $\mathcal{E}_k$. Hence, we refer to $\mathcal{F}$ as the measurement operator and to $\mathcal{F}(\gamma)$ as the current-voltage measurements. The resulting inverse problem is formulated as follows:
\begin{center}
\begin{equation}\label{inverseproblem}
\hspace{-9ex} \textit{Reconstruct the corrosion parameter}~\gamma ~ \textit{from the measurements}~ \hat Y=:\mathcal{F}(\hat \gamma). 
 \end{equation}
  \end{center}
Even if the unique solvability of the inverse problem \cref{inverseproblem} can be proven, major challenges arise due to its ill-posedness and the non-convexity of the natural data fitting approach. In Figure \ref{fig2444}, we apply the generic \texttt{MATLAB} solver \texttt{lsqnonlin} to minimize the least-squares residual function
  \begin{center}
 \[ \Vert \mathcal{F}(\gamma)-\hat Y\Vert_F^2 \to \min ! \]
   \end{center}
using the first order trust-region-reflective algorithm to minimize the least-squares residual function and demonstrate the difficulty of global reconstruction. Here, we consider the setup of a small circle inside a larger circle with an equidistant partition $\Gamma=\Gamma_1 \cup \ldots  \cup \Gamma_n$ and uniformly placed electrodes $\mathcal{E}_1,\ldots,\mathcal{E}_m$ along the boundary $\partial\Omega$, as in Figure \ref{fig44}, with $n=2$, $m=4$ and bounds $a=1$ and $b=3$. 
\begin{figure}[h]
    \centering
  {{\includegraphics[width=0.46\textwidth]{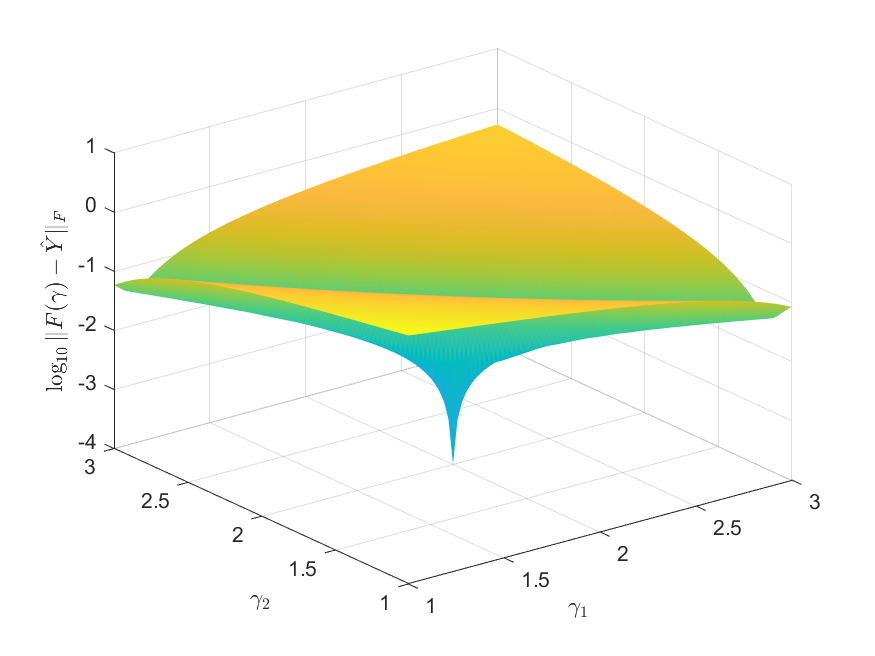} }}%
    \qquad
   {{\includegraphics[width=0.46\textwidth]{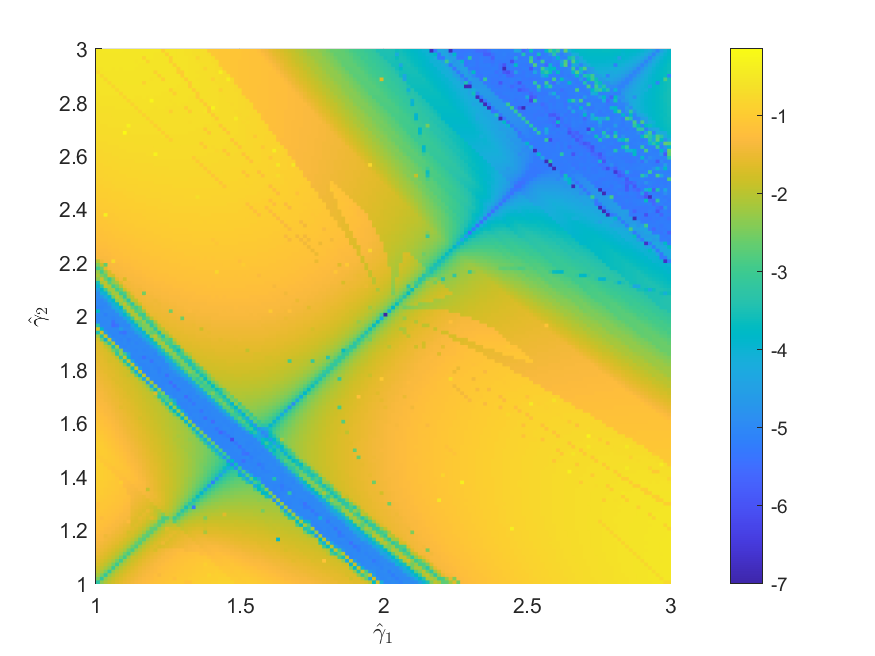} }}%
    \caption{Plot of the residual function with $\hat \gamma =(2, 2)$ and the logarithmic error $\log_{10}(\Vert \gamma^{(N)}- \hat{\gamma}\Vert_2)$ for the least squares data fitting approach with varying corrosion parameters $\hat \gamma=(\hat \gamma_1,\hat \gamma_2)$. The initial value is set to $\gamma^{(0)} =(2, 2)$. The maximum error $\Vert \gamma^{(N)}- \hat{\gamma}\Vert_2  \approx 0.72$ occurs at $\hat{\gamma}=(1.09,2.68)$.}
    \label{fig2444}%
\end{figure}

To overcome these difficulties, we establish a criterion. For this, one must choose a priori bounds $a<\gamma_i<b$ for $i=1,\ldots,n$. The criterion ensures that, for a given number of electrodes $m$:
\begin{itemize}
\item The inverse problem \cref{inverseproblem} is uniquely solvable.
\item Provide an explicit Lipschitz stability constant.
\item There is a globally convergent reconstruction algorithm.
\end{itemize}

\begin{figure}[h]
%\capstarttrue
\begin{center}
\includegraphics[width=0.46\textwidth]{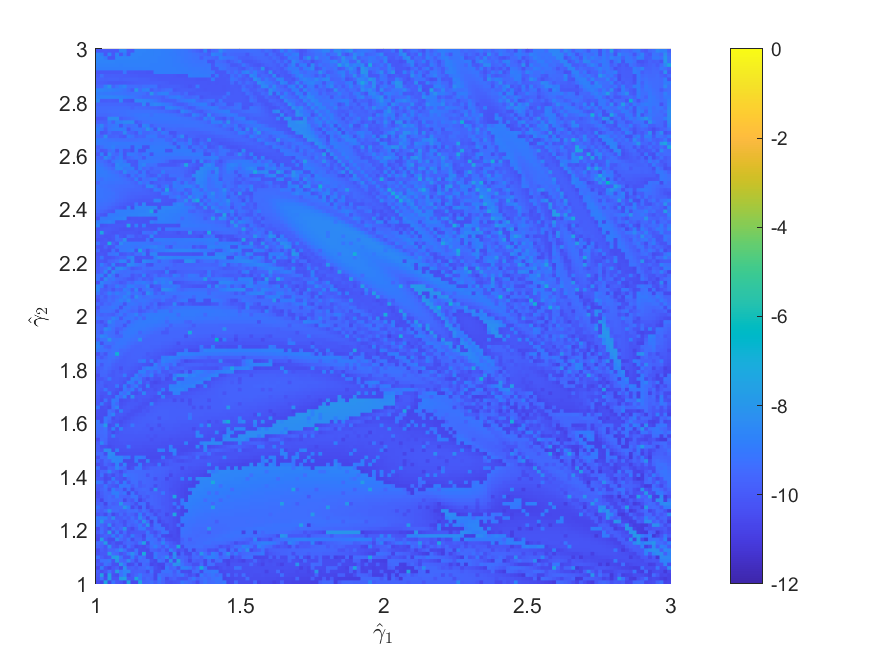}
\caption{Plot of the equivalent approach for varying corrosion parameters $\hat \gamma=(\hat \gamma_1,\hat \gamma_2)$, with the initial value set to $\gamma^{(0)} =(2, 2)$. The maximum error is $\Vert \gamma^{(N)}- \hat{\gamma}\Vert_2\leq 5.5\cdot 10^{-7}$.}\label{cvxconv}
\end{center}
\end{figure}

To compare the least squares data fitting approach shown in Figure \ref{fig2444} with our convex reformulation, which does not require an initial value, we we initialize $\gamma^{(0)} =(2, 2)$ and systematically vary the true corrosion parameter $\hat \gamma=(\hat \gamma_1,\hat \gamma_2)$ within the square $[a,b]^2$. Figure \ref{cvxconv} illustrates that convergence can be directly improved through the equivalent convex reformulation, as proposed in Theorem \ref{mainsemidefishunt}. The key idea behind formulating such a criterion for uniqueness and global convergence is based on monotonicity and convexity relations, as presented in \cite{harrach2022solving}.

To formulate the main results, we first introduce some notation. For $x,y\in \R^n$, the relation $ x\leq y$ is understood pointwise, meaning $ x\leq y$ if and only if $ x_i \leq y_i $ for $i=1,\ldots,n$. Similarly, for $A,B\in \mathbb{S}^m$, the relation $A	\preceq B $ is understood in the Loewner sense, i.e.,  $A	\preceq B $ if and only if $\left(  B-A \right)$ is positive semidefinite. 

Furthermore let $e_j\in \R^n$ denote $j$-th unit vector, and define $e_j'\colon=\mathbbm{1}-e_j\in \R^n$ as the negated unit vector, which has a zero at the $j$-th component and ones in all other components.

\begin{thm}\label{uniqshunt}
For a resolution of dimension $n\in \N$, $m\in \N$ fixed electrodes and $n,m\geq 2$ let $\mathcal{F}\colon \R^n_+ \to \mathbb{S}_m $ be the measurement operator $\mathcal{F}(\gamma)I = U$ as in Equation \cref{NtD}. Furthermore let $a,b\in \R$ with $0<a<b$. If 
\begin{equation}\label{criterion1}
\mathcal{F}'\left(z_{j,k}\right)d_j\npreceq 0 \quad \textrm{for all}\quad k\in \lbrace 2,\ldots,K\rbrace,~ j\in \lbrace 1,\ldots,n \rbrace, 
\end{equation}
where 
\[z_{j,k}:=\frac{a}{2}e_j' + \left(a+k\frac{a}{4}\right)  e_j\in \R^n_+, ~ d_j:=\frac{2b-a}{a}e_j' - \frac{1}{2}   e_j\in \R^n
\]
and  $K:=\max\left(\lceil\frac{4b}{a}\rceil-3,2\right)$, then the following holds:
\begin{itemize}
\item[(a)] $\mathcal{F}'(\gamma)\in \mathcal{L}\left( \R^n, \mathbb{S}_m \right)$ is injective for all $\gamma\in [a,b]^n$ and for all $0\neq d\in \R^n$,
\[ \frac{\Vert\mathcal{F}'(\gamma)d\Vert_2}{\Vert d\Vert_\infty}\geq \lambda := \min_{\mathop{j=1,\ldots,n}\limits_{k=2,\ldots,K}}\lambda_{\max} \left(\mathcal{F}'(z_{j,k})d_j\right)>0. \]
\item[(b)] $\mathcal{F}\colon \R^n_+ \to \mathbb{S}^m $ is injective and
\[ \Vert \mathcal{F}(\gamma_1)-\mathcal{F}(\gamma_2) \Vert_2 \geq \lambda \Vert \gamma_1-\gamma_2 \Vert_\infty \]
for all $\gamma_1,\gamma_2\in [a,b]^n$ i.e. the inverse problem
\begin{center}
$ \textrm{determine $\hat \gamma\in [a,b]^n$ from the knowledge of $ \mathcal{F}(\hat{\gamma})$} $
\end{center}
is uniquely solvable.
\end{itemize}
\end{thm}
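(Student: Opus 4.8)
The plan is to derive everything from two structural properties of $\mathcal F$ and one explicit formula for its derivative, then read off (b) from (a) by a short convexity argument, so that the whole difficulty is concentrated in a finite reduction for (a).

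\emph{Structural input.} Starting from the identity $I^\top\mathcal F(\gamma)I=b(u_\gamma^{(I)},u_\gamma^{(I)})$ recorded in the excerpt, I would add the dual (maximisation) characterisation
\[ I^\top\mathcal F(\gamma)I=\max_{v\in H^1_\square(\Omega)}\Big(2\sum_{k=1}^m I_k\,v|_{\mathcal E_k}-\int_\Omega|\nabla v|^2\dx-\int_\Gamma\gamma\,v^2\ds\Big), \]
whose maximiser is $u_\gamma^{(I)}$. Since $\gamma$ enters affinely with a negative sign, $\gamma\mapsto I^\top\mathcal F(\gamma)I$ is, for each fixed $I$, a pointwise maximum of affine functions and is decreasing in $\gamma$; hence $\mathcal F$ is Loewner non-increasing and Loewner-convex. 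Differentiating (envelope theorem) yields the boundary-integral formula
\[ I^\top\mathcal F'(\gamma)d\,I=-\int_\Gamma d\,\big(u_\gamma^{(I)}\big)^2\ds, \]
from which I extract the two monotonicities that drive the proof: (i) for fixed $\gamma$ the map $d\mapsto\mathcal F'(\gamma)d$ is linear and sign-reversing, i.e. $d\le\tilde d\Rightarrow\mathcal F'(\gamma)d\succeq\mathcal F'(\gamma)\tilde d$ (because $(u_\gamma^{(I)})^2\ge0$); and (ii) convexity gives monotonicity of the gradient, $\big(\mathcal F'(\gamma)-\mathcal F'(\tilde\gamma)\big)(\gamma-\tilde\gamma)\succeq0$, so that along any ray $s\mapsto\mathcal F'(\tilde\gamma+s\,e)e$ is Loewner non-decreasing.

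\emph{From (a) to (b).} Granting (a), I obtain (b) in one line from the supporting-hyperplane form of convexity, $\mathcal F(\gamma_1)-\mathcal F(\gamma_2)\succeq\mathcal F'(\gamma_2)(\gamma_1-\gamma_2)$. Taking the largest eigenvalue and using that $A\succeq B$ implies $\lambda_{\max}(A)\ge\lambda_{\max}(B)$ together with the bound from (a), I get $\lambda_{\max}\big(\mathcal F(\gamma_1)-\mathcal F(\gamma_2)\big)\ge\lambda_{\max}\big(\mathcal F'(\gamma_2)(\gamma_1-\gamma_2)\big)\ge\lambda\|\gamma_1-\gamma_2\|_\infty$, whence $\|\mathcal F(\gamma_1)-\mathcal F(\gamma_2)\|_2\ge\lambda\|\gamma_1-\gamma_2\|_\infty$ and injectivity is immediate.

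\emph{Proof of (a).} By homogeneity I normalise $\|d\|_\infty=1$ and pick a coordinate $j$ where the maximum is attained, so the $j$-th entry of $d$ has modulus $1$; replacing $d$ by $-d$ if necessary (which leaves $\|\mathcal F'(\gamma)d\|_2$ unchanged and only helps, since $\|A\|_2\ge\lambda_{\max}(-A)$) I may assume this entry equals $-1$. Then $d\le d_j$ holds componentwise: in the $j$-th slot $-1\le-\frac12$, and for $i\ne j$ the entries of $d$ are $\le1\le\frac{2b-a}{a}$. The heart of the argument is a three-term Loewner sandwich: for a suitable index $k$, a scalar $s\ge0$ and an auxiliary direction $d'$ with $d\le d'\le d_j$ and $\gamma-z_{j,k}=s\,d'$, monotonicity (i) at $\gamma$, monotonicity (ii) along the ray $d'$, and monotonicity (i) at $z_{j,k}$ chain together to give
\[ \mathcal F'(\gamma)d\ \succeq\ \mathcal F'(\gamma)d'\ =\ \mathcal F'(z_{j,k}+s\,d')d'\ \succeq\ \mathcal F'(z_{j,k})d'\ \succeq\ \mathcal F'(z_{j,k})d_j. \]
Passing to $\lambda_{\max}$ and minimising over $j,k$ gives $\lambda_{\max}(\mathcal F'(\gamma)d)\ge\lambda$, and \cref{criterion1} guarantees $\lambda>0$. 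The constants are exactly what makes the sandwich feasible: the off-diagonal value $\frac a2$ of $z_{j,k}$ lies strictly below the box floor $a$, so the off-diagonal part of $\gamma-z_{j,k}$ is positive and realisable as a positive multiple of an admissible $d'$; the slope $\frac{2b-a}{a}$ is the largest entry permitted in $d'$; and the grid $a+k\frac a4$ with step $\frac a4$ and length $K$ is tuned so that the reachable $j$-th coordinates cover all of $[a,b]$.

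\emph{Main obstacle.} The genuinely delicate point is the covering/feasibility step inside the sandwich: proving that for \emph{every} $\gamma\in[a,b]^n$ and normalised $d$ one can simultaneously produce $k\in\{2,\dots,K\}$, $s\ge0$ and $d'$ with $d\le d'\le d_j$ and $\gamma-z_{j,k}=s\,d'$. This is a quantitative packing estimate matching the step $\frac a4$, the slope $\frac{2b-a}{a}$ and the count $K=\max(\lceil\frac{4b}{a}\rceil-3,2)$ to the geometry of the box, and is exactly where the precise form of $z_{j,k}$, $d_j$ and $K$ must be checked. A secondary, more routine point is to justify rigorously the Gâteaux differentiability of $\gamma\mapsto u_\gamma^{(I)}$ and the envelope/derivative formula in the shunt model; these are standard for the Robin transmission problem but should be stated with care.
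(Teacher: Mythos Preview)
Your proposal is correct and follows essentially the same route as the paper: monotonicity/convexity and the derivative formula (Lemma~\ref{monoton}), reduction of an arbitrary $d$ to the extremal direction together with the convexity step for (b) (Lemma~\ref{lem11}), and the finite sandwich whose intermediate direction $\tfrac{2}{a}(\gamma-z_{j,k})$ is exactly your $d'$ with $s=\tfrac a2$, verifying the covering you flag as the main obstacle (Lemma~\ref{lemma2} and Corollary~\ref{coro2}); your gradient-monotonicity inequality $\mathcal F'(\gamma)d'\succeq\mathcal F'(z_{j,k})d'$ is just the paper's two-sided tangent bound $\mathcal F'(x)(x-z)\succeq\mathcal F(x)-\mathcal F(z)\succeq\mathcal F'(z)(x-z)$ compressed into one line. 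One small fix: your one-line derivation of (b) from (a) needs the symmetric case---swap $\gamma_1\leftrightarrow\gamma_2$ when the extremal entry of $\gamma_1-\gamma_2$ is positive, since the proof of (a) controls $\lambda_{\max}(\mathcal F'(\gamma)d)$ only for the sign of $d$ making that entry $-1$, and convexity must then be invoked at the matching base point (the paper's Lemma~\ref{lem11}(b) makes this case distinction explicit).
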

%\begin{figure}[h]
%    \centering
%  {{\includegraphics[width=0.46\textwidth]{plotresidual(2,2).eps} }}%
%    \qquad
%   {{\includegraphics[width=0.46\textwidth]{convergence_LSQ80120.eps} }}%
%    \caption{Plot of the residual function and the logaritmic error $\log_{10}(\Vert \gamma^{(N)}- \hat{\gamma}\Vert)$ of the least squares data fitting approach for different corrosion parameters $\hat \gamma=(\hat \gamma_1,\hat \gamma_2)$, with the initial value $\gamma^{(0)} =(2, 2)$.}
%    \label{fig2444}%
%\end{figure}
%\begin{figure}[h]
%%\capstarttrue
%\begin{center}
%\includegraphics[width=0.46\textwidth]{plotcvxrec.eps}\\
%\caption{Plot of the eqivalent approach for different corrosion parameters $\hat \gamma=(\hat \gamma_1,\hat \gamma_2)$, with the initial value $\gamma^{(0)} =(2, 2)$.}\label{cvxconv}
%\end{center}
%\end{figure}
Criterion \cref{criterion1} allows us to decide whether a number of electrodes is sufficient for unique solvability of the inverse problem \cref{inverseproblem}. The evaluation of the criterion already provides an inverse Lipschitz stability constant. The results in Theorem \ref{uniqshunt} generalize the uniqueness result in Theorem 1 in \cite{harrach2022solving} to tighter assumptions. For the proof of Theorem \ref{uniqshunt} in Section \ref{section2} and Section \ref{section3}, we will generalize Lemma 1 and Lemma 2 in \cite{harrach2022solving}.

For a fixed number of electrodes $m$ and a resolution of dimension $n$, one can explicitly check criterion \cref{criterion1}, and therefore decide whether the number and position of the electrodes is sufficient for unique solvability and Lipschitz stability of the inverse problem. Our second result provides a criterion for a convex reformulation of the considered inverse problem and gives an error estimate for noisy data. Note that the assumptions of the following theorem imply those of Theorem \ref{uniqshunt}, as explained in Section \ref{section3}. 

\begin{thm} \label{mainsemidefishunt}
Let $n,m\geq 2$, $\mathcal{F}\colon \R^n_+ \to \mathbb{S}_m $ be the measurement operator $\mathcal{F}(\gamma)I = U$ and $0<a<b$. For the assumptions %relaxed 
\[
\hspace{-9ex} z_{j,k}:=\frac{a}{2}e_j' + \left(a+k\frac{a}{4(n-1)}\right)  e_j\in \R^n_+, ~ d_j:=\frac{2b-a}{a}(n-1)e_j' - \frac{1}{2}   e_j\in \R^n
\] 
and  $K:=\max\left(\lceil\frac{4(n-1)b}{a}\rceil-4n-3,2\right)$ it holds: \\ \\
If 
\begin{equation}\label{criterion2}
\mathcal{F}'\left(z_{j,k}\right)d_j \npreceq 0 \quad \textrm{for all}\quad k\in \lbrace 2,\ldots,K\rbrace,~ j\in \lbrace 1,\ldots,n \rbrace, 
\end{equation}
then the following holds additionaly to the asserations of Theorem \ref{uniqshunt}:
\begin{itemize}
\item[(a)] The inverse problem
\begin{center}
determine $\hat \gamma\in [a,b]^n$ from the knowledge of $\hat Y = \mathcal{F}(\hat{\gamma})$
\end{center}
is uniquely solvable and $\hat \gamma$ is the unique minimizer of the convex optimization problem:
\begin{equation}\label{semidefiniteform}
\texttt{minimize} ~ \sum_{i=1}^n \gamma_i~ \textrm{subject to} ~ \gamma \in [a,b]^n, ~ \mathcal{F}(\gamma) \preceq \hat Y.  
\end{equation}
\item[(b)]
For $\hat{\gamma}\in [a,b]^n$, $\delta>0$ and $Y^\delta \in \mathbb{S}^m $, with $\Vert \hat Y- Y^\delta \Vert_2 \leq \delta$ the convex problem:
\[
\texttt{minimize} ~ \sum_{i=1}^n \gamma_i~ \textrm{subject to} ~ \gamma \in [a,b]^n, ~ \mathcal{F}(\gamma) \preceq  Y^\delta+\delta I  
\]
possesses a minimum, and every such minimum $\gamma^\delta$ fulfills
\[ \Vert \hat{\gamma}- \gamma^\delta \Vert_\infty \leq \frac{2\delta(n-1)}{\lambda},\]
with $\lambda :=\min_{j,k}\lambda_{\max} \left(\mathcal{F}'(z_{j,k})d_k \right)$.
\end{itemize}
\end{thm}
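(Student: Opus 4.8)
The plan is to build on Theorem \ref{uniqshunt} — whose hypotheses are implied by \cref{criterion2} — together with two structural properties of the measurement operator that generalize Lemma~1 of \cite{harrach2022solving}: that $\mathcal{F}$ is monotonically non-increasing and convex in the Loewner sense. Both follow from the dual (energy) characterization $I^T\mathcal{F}(\gamma)I = \max_{v\in H_\square^1(\Omega)}\left(2\sum_{k=1}^m I_k(v|_{\mathcal{E}_k}) - b_\gamma(v,v)\right)$, where $b_\gamma$ is the variational form of \cref{robineqn} viewed as a function of $\gamma$: since $\gamma\mapsto b_\gamma(v,v)$ is affine and non-decreasing, a pointwise maximum of affine, non-increasing functions of $\gamma$ is convex and non-increasing. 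Matrix convexity makes every sublevel set $\{\gamma:\mathcal{F}(\gamma)\preceq M\}$ convex, so the feasible region of \cref{semidefiniteform} is convex and the linear objective $\sum_i\gamma_i$ renders \cref{semidefiniteform} a genuine convex program; moreover $\hat\gamma$ is feasible because $\mathcal{F}(\hat\gamma)=\hat Y$.

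For part (a) I would reduce optimality and uniqueness of $\hat\gamma$ to the single claim that every feasible $\gamma$ satisfies $\gamma\geq\hat\gamma$ componentwise. Granting this, $\sum_i\gamma_i\geq\sum_i\hat\gamma_i$ with equality forcing $\gamma=\hat\gamma$, so $\hat\gamma$ is the unique minimizer (uniqueness being also underwritten by the Lipschitz estimate of Theorem \ref{uniqshunt}(b)). Establishing the implication $\mathcal{F}(\gamma)\preceq\mathcal{F}(\hat\gamma)\Rightarrow\gamma\geq\hat\gamma$ is the generalization of Lemma~2 of \cite{harrach2022solving} and is the main obstacle. Here I would argue by contradiction: assuming $\gamma_j<\hat\gamma_j$ for some $j$, convexity lets me bound the secant $\mathcal{F}(\hat\gamma)-\mathcal{F}(\gamma)$ from one side by a directional derivative $\mathcal{F}'(z)d$, and the sample points $z_{j,k}$ are chosen to sweep the $j$-th coordinate across $[a,b]$ with the remaining coordinates pinned at $a/2$, while the direction $d_j$ couples a small decrease in coordinate $j$ against a large increase in the others. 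The step size $\frac{a}{4(n-1)}$ and the range bound $K$ are calibrated so that, by convexity, the finitely many conditions $\mathcal{F}'(z_{j,k})d_j\npreceq 0$ from \cref{criterion2} certify the required strict monotonicity uniformly over the whole box, producing a vector $x$ with $x^T(\mathcal{F}(\gamma)-\mathcal{F}(\hat\gamma))x>0$ and contradicting feasibility. Making the constants in $z_{j,k}$, $d_j$ and $K$ (with their $(n-1)$ factors) line up so that the discrete derivative tests dominate the continuous secant is the delicate part.

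For part (b) I would first check that $\hat\gamma$ remains feasible for the relaxed constraint: $\|\hat Y-Y^\delta\|_2\leq\delta$ gives $\hat Y\preceq Y^\delta+\delta I$, hence $\mathcal{F}(\hat\gamma)=\hat Y\preceq Y^\delta+\delta I$, and existence of a minimizer $\gamma^\delta$ follows from continuity of $\mathcal{F}$ and compactness of $[a,b]^n$. Using also $Y^\delta\preceq\hat Y+\delta I$, any minimizer obeys $\mathcal{F}(\gamma^\delta)\preceq\mathcal{F}(\hat\gamma)+2\delta I$, a $2\delta$-relaxation of feasibility. Rerunning the Lemma~2 argument with this relaxation, the positive eigenvalue quantified by $\lambda$ yields the one-sided bound $\gamma^\delta_i\geq\hat\gamma_i-\frac{2\delta}{\lambda}$ for every $i$. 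Finally, optimality of $\gamma^\delta$ against the feasible competitor $\hat\gamma$ gives $\sum_i(\gamma^\delta_i-\hat\gamma_i)\leq 0$; combining this with the componentwise lower bounds, for each fixed index $p$ one has $\gamma^\delta_p-\hat\gamma_p\leq-\sum_{i\neq p}(\gamma^\delta_i-\hat\gamma_i)\leq(n-1)\frac{2\delta}{\lambda}$, while $\gamma^\delta_p-\hat\gamma_p\geq-\frac{2\delta}{\lambda}$, whence $\|\hat\gamma-\gamma^\delta\|_\infty\leq\frac{2\delta(n-1)}{\lambda}$. Here the factor $2$ reflects the $2\delta$-relaxation, $1/\lambda$ the detection sensitivity of the derivative criterion, and $(n-1)$ the passage from the summed ($\ell^1$) objective to the $\ell^\infty$ error.
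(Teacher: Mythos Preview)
Your overall strategy matches the paper's: both route the result through the monotonicity and convexity of $\mathcal{F}$ (the paper's Lemma~\ref{monoton}) and then invoke the converse-monotonicity machinery of \cite[Theorem~1]{harrach2022solving}. But the central reduction you propose for part~(a) is too strong and does not follow from \cref{criterion2}.

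You claim that $\mathcal{F}(\gamma)\preceq\mathcal{F}(\hat\gamma)$ forces $\gamma\geq\hat\gamma$ \emph{componentwise}. The sketched contradiction does not close: if $\gamma_j<\hat\gamma_j$ while $\gamma_i-\hat\gamma_i$ is large and positive for some $i\neq j$, then for $d=\gamma-\hat\gamma$ no scaling $c>0$ achieves $cd\leq (n-1)e_j'-e_j$, because after normalizing $(cd)_j=-1$ the $i$-th entry can exceed $n-1$. What the criterion with $C=n-1$ actually yields (and what the paper uses) is only the $\ell^1$ version: if $\mathcal{F}(\gamma)\preceq\mathcal{F}(\hat\gamma)$ and $\gamma\neq\hat\gamma$ then $\sum_i\gamma_i>\sum_i\hat\gamma_i$. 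The reason the factor $(n-1)$ is exactly right here is that for any $d\neq 0$ with $\sum_i d_i\leq 0$ one has $\max_i d_i+(n-1)\min_i d_i\leq\sum_i d_i\leq 0$, so choosing $j=\arg\min_i d_i$ and $c=-1/d_j$ gives $cd\leq (n-1)e_j'-e_j$; monotonicity then yields $\mathcal{F}'(\hat\gamma)d\succeq c^{-1}\mathcal{F}'(\hat\gamma)\big((n-1)e_j'-e_j\big)\npreceq 0$, contradicting $\mathcal{F}'(\hat\gamma)d\preceq 0$. The $\ell^1$ statement is already sufficient for~(a), so your conclusion survives, but the intermediate claim should be weakened.

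The same issue recurs in part~(b). The componentwise lower bound $\gamma^\delta_i\geq\hat\gamma_i-2\delta/\lambda$ does \emph{not} follow from the relaxed feasibility $\mathcal{F}(\gamma^\delta)\preceq\mathcal{F}(\hat\gamma)+2\delta I$ alone; you need the optimality information $\sum_i(\gamma^\delta_i-\hat\gamma_i)\leq 0$ as an \emph{input} to that step, not as a subsequent one. With the order reversed your argument is fine: from $\sum_i d_i\leq 0$ one gets $cd\leq (n-1)e_j'-e_j$ with $c=-1/\min_i d_i$, and then $\lambda\leq\lambda_{\max}\big(c\,\mathcal{F}'(\hat\gamma)d\big)\leq 2c\delta$ gives $-\min_i d_i\leq 2\delta/\lambda$, after which your combination with $\sum_i d_i\leq 0$ correctly produces the factor $(n-1)$ in the $\ell^\infty$ bound.
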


Theorem \ref{mainsemidefishunt} follows directly  from \cite[Theorem 1]{harrach2022solving} using the monotonicity and convexity relations established in Lemma \ref{monoton}.  The core idea of the proof in \cite{harrach2022solving} is based on a converse monotonicity result. It is shown that if $\mathcal{F}$ satisfies condition \cref{criterion2}, then
\[ \mathcal{F}(y)\preceq \mathcal{F}(x) \quad \textrm{implies}\quad \sum_{i=1}^n(y_i-x_i)>0 \qquad \textrm{for all}~x,y\in \R^n_+,~x\neq y.  \]
This provides a reformulation of the inverse problem \cref{inverseproblem} as a convex semi-definite optimization problem. But here the constant $C=(n-1)$, which extends the condition of Theorem \ref{uniqshunt}, plays a substantial role.

\begin{lemma} \label{monoton}
The forward map $\mathcal{F}\colon \R^n_+ \to \mathbb{S}^m $ is infinitely many times differentiable and $\mathcal{F}'\colon \R^n_+\to \mathcal{L}\left(\R^n, \mathbb{S}^m \right)$ is  given  by
\begin{equation*}
 I^T \left( \mathcal{F}'(\gamma)\delta \right) J = -\sum_{i=1}^n \delta_i \int_{\Gamma_i}  u_\gamma^{(I)}u_\gamma^{(J)} \dx.
\end{equation*}
Moreover  $\mathcal{F}$ is monotonically non-increasing and convex, i.e. it holds 
\begin{equation*}
\begin{array}{rcll}
\mathcal{F}'(\gamma)\delta &\preceq& 0 & \quad \textrm{for all } \gamma \in \mathbb{R}^n_+, \, 0 \leq \delta \in \mathbb{R}^n, \\[8pt]
\mathcal{F}(\gamma) - \mathcal{F}(\gamma^{(0)}) &\succeq& \mathcal{F}'(\gamma^{(0)})\left(\gamma - \gamma^{(0)}\right) & \quad \textrm{for all } \gamma, \gamma^{(0)} \in \mathbb{R}^n_+.
\end{array}
\end{equation*}
\end{lemma}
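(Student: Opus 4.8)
The plan is to proceed in four stages: smooth dependence of the solution on $\gamma$, the first-derivative formula, monotonicity, and finally convexity through a dual variational principle. First I would record that the bilinear form, written $b_\gamma$ to display its dependence on the coefficient, has the shape
\[
b_\gamma(u,v) = \int_\Omega \nabla u \cdot \nabla v \dx + \sum_{j=1}^n \gamma_j \int_{\Gamma_j} u\, v \ds ,
\]
so the associated operator $A(\gamma) \in \mathcal{L}(H_\square^1(\Omega), H_\square^1(\Omega)^*)$ depends affinely on $\gamma$ and is boundedly invertible for every $\gamma \in \R^n_+$ by coercivity. Since $u_\gamma^{(I)} = A(\gamma)^{-1} F_I$ with $F_I(v) := \sum_{k=1}^m I_k (v|_{\mathcal{E}_k})$ independent of $\gamma$, and operator inversion is real-analytic via its Neumann series, the map $\gamma \mapsto u_\gamma^{(I)} \in H_\square^1(\Omega)$ is $C^\infty$; composing with $b_\gamma$ then shows $\mathcal{F} \in C^\infty$, and arbitrarily high derivatives follow by iterating the argument below.

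For the derivative formula I would differentiate the weak identity $b_\gamma(u_\gamma^{(I)}, v) = F_I(v)$ in the direction $\delta$ at fixed $v \in H_\square^1(\Omega)$, which gives
\[
b_\gamma\!\left(\partial_\delta u_\gamma^{(I)}, v\right) = -\sum_{j=1}^n \delta_j \int_{\Gamma_j} u_\gamma^{(I)}\, v \ds ,
\]
where $\partial_\delta u_\gamma^{(I)}$ again lies in the fixed closed subspace $H_\square^1(\Omega)$ and is therefore admissible as a test function. Using $I^T \mathcal{F}(\gamma) J = F_I(u_\gamma^{(J)})$ together with the variational characterization $F_I(w) = b_\gamma(u_\gamma^{(I)}, w)$ and the symmetry of $b_\gamma$, differentiation yields $\partial_\delta [I^T \mathcal{F}(\gamma) J] = b_\gamma(\partial_\delta u_\gamma^{(J)}, u_\gamma^{(I)})$; substituting $v = u_\gamma^{(I)}$ into the displayed identity for $u_\gamma^{(J)}$ collapses the implicit terms and produces exactly $-\sum_i \delta_i \int_{\Gamma_i} u_\gamma^{(I)} u_\gamma^{(J)}$, the claimed expression. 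Monotonicity is then immediate: testing against an arbitrary current gives $I^T (\mathcal{F}'(\gamma)\delta) I = -\sum_i \delta_i \int_{\Gamma_i} (u_\gamma^{(I)})^2 \ds \le 0$ whenever $0 \le \delta$, hence $\mathcal{F}'(\gamma)\delta \preceq 0$.

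For convexity I would avoid computing $\mathcal{F}''$ and instead exploit the energy characterization: $u_\gamma^{(I)}$ minimizes $v \mapsto \frac{1}{2} b_\gamma(v,v) - F_I(v)$ over $H_\square^1(\Omega)$, and evaluating at the minimizer gives the dual formula $I^T \mathcal{F}(\gamma) I = \max_{v \in H_\square^1(\Omega)} \left( 2 F_I(v) - b_\gamma(v,v) \right)$. Since $b_\gamma(v,v)$ is affine in $\gamma$ for each fixed $v$, the quantity inside the maximum is affine in $\gamma$, so $\gamma \mapsto I^T \mathcal{F}(\gamma) I$ is a pointwise supremum of affine functions and hence convex. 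Scalar convexity reads $I^T (\mathcal{F}(\gamma) - \mathcal{F}(\gamma^{(0)})) I \ge I^T \mathcal{F}'(\gamma^{(0)})(\gamma - \gamma^{(0)}) I$ for every current $I$, which is precisely the asserted Loewner inequality.

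The routine parts are the differentiations themselves; the step I expect to be the main obstacle is the rigorous justification that $\gamma \mapsto u_\gamma^{(I)}$ is genuinely differentiable with $\partial_\delta u_\gamma^{(I)} \in H_\square^1(\Omega)$ characterized by the differentiated weak form, together with the careful setup of the dual maximization formula (existence and uniqueness of the minimizer from coercivity and strict convexity of the energy). Once these functional-analytic points are secured, the algebraic collapse producing $\mathcal{F}'$ and the supremum-of-affine-functions argument for convexity are short.
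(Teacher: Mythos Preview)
Your proposal is correct and, in fact, far more detailed than the paper's own proof, which consists of a single sentence citing Lemma~2 and Corollary~1 of \cite{harrach2021introduction}. The argument you outline---affine dependence of $A(\gamma)$ plus analyticity of inversion for smoothness, differentiation of the weak formulation for the derivative formula, and the quadratic-form sign for monotonicity---is precisely the standard route and matches what the cited reference carries out (there for a finite-element discretization, but the continuous case is identical in structure).

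Your convexity argument via the dual variational characterization $I^T\mathcal{F}(\gamma)I = \max_{v} \bigl(2F_I(v) - b_\gamma(v,v)\bigr)$ is a particularly clean choice: it sidesteps the second-derivative computation entirely by recognizing the measurement quadratic form as a pointwise supremum of functions affine in $\gamma$. This is equivalent to, but arguably more transparent than, the approach in \cite{harrach2021introduction}, which derives the same Loewner inequality through an explicit energy comparison between $u_\gamma$ and $u_{\gamma^{(0)}}$ tested in both bilinear forms. Both routes ultimately rest on the same Dirichlet-principle identity, so the difference is one of presentation rather than substance.
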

\begin{proof}
This follows directly from Lemma 2 and Corollary 1 in \cite{harrach2021introduction}. 
\end{proof}
Monotonicity and convexity can also be written as
\[0< \gamma^{(0)}\leq \gamma^{(1)} \qquad \textrm{implies} \qquad \mathcal{F}\left( \gamma^{(0)} \right) 	\succeq \mathcal{F}\left( \gamma^{(1)} \right), %\qquad \left(  \mathcal{F}\left( \gamma^{(0)} \right) - \mathcal{F}\left( \gamma^{(1)} \right) ~ \textrm{is positive semidefinite} \right), 
\]
and for all $ \gamma^{(0)},\gamma^{(1)}\in \R^n_+$, $t\in[0,1]$
\[ \mathcal{F}\left((1-t) \gamma^{(0)} +t \gamma^{(1)} \right)\preceq (1-t) \mathcal{F}\left( \gamma^{(0)} \right) +t \mathcal{F}\left( \gamma^{(1)} \right) . \]
The equivalence to the differential characterization in Lemma \ref{monoton} is shown in \cite[Lemma 2]{harrach2021introduction}.

\subsection{A criterion for uniqueness and Lipschitz stability}\label{section2} To prove Theorem \ref{uniqshunt} we first derive a sufficient criterion for unique solvability of the inverse problem that involves the directional derivatives in arbitary points $x\in [a,b]^n$.
%utilizes directional derivatives in arbitary points $x\in [a,b]^n$.
\begin{lemma}\label{lem11}
Let $m,n\in \N$, $n,m\geq 2$ and $F\colon \R^n_+ \to \mathbb{S}_m $ be continuously differentiable, monotonically non-increasing and convex.
\begin{itemize}
\item[(a)] If for some $x\in  \R^n_+$
\[  F'\left(x \right)\left( e_j'-e_j \right)\npreceq 0 \quad \textrm{for all}\quad j\in \lbrace 1,\ldots,n \rbrace,\]
then $F(x)'\in \mathcal{L}\left( \R^n, \mathbb{S}_m \right)$ is injective for all $x\in [a,b]^n$. Moreover, for all $0\neq d\in \R^n$,
\begin{equation}\label{lemeq}
\frac{\Vert F(x)'d\Vert_2}{\Vert d\Vert_\infty}\geq \lambda_x := \min_{j=1,\ldots,n}\lambda_{\max} \left(F'\left(x \right)\left( e_j'-e_j \right)\right)>0.
\end{equation} 
\item[(b)] If $x,y\in [a,b]^n$ fulfill \cref{lemeq}, then
\[ \Vert F(x)-F(y) \Vert_2 \geq \lambda \Vert x-y \Vert_\infty \]
with $\lambda:=\min\lbrace \lambda_x,\lambda_y \rbrace>0$.
\end{itemize}
\end{lemma}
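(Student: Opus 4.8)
The plan is to derive both parts purely from the two structural properties of $F$ recorded in Lemma~\ref{monoton}: monotonicity ($F'(\gamma)\delta\preceq 0$ whenever $0\le\delta$) and the convexity inequality $F(\gamma)-F(\gamma^{(0)})\succeq F'(\gamma^{(0)})(\gamma-\gamma^{(0)})$. Throughout I would use three elementary facts about symmetric matrices $A,B$: that $A\npreceq 0$ is equivalent to $\lambda_{\max}(A)>0$; that $A\preceq B$ forces both $\lambda_{\max}(A)\le\lambda_{\max}(B)$ and $\lambda_{\min}(A)\le\lambda_{\min}(B)$; and that $\Vert A\Vert_2=\max(\lambda_{\max}(A),-\lambda_{\min}(A))$.

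The technical core is a single monotonicity estimate that I would isolate first. Suppose $d\in\R^n$ attains its extremal component in position $j$ with negative sign, i.e. $d_j=-\Vert d\Vert_\infty$. Writing $\tilde d:=d/\Vert d\Vert_\infty$, the vector $r:=\tilde d-(e_j'-e_j)$ has $r_j=0$ and $r_i=\tilde d_i-1\le 0$ for $i\ne j$, so $r\le 0$. Monotonicity then gives $F'(x)r\succeq 0$, whence $F'(x)\tilde d\succeq F'(x)(e_j'-e_j)$ and, after rescaling, $F'(x)d\succeq\Vert d\Vert_\infty\,F'(x)(e_j'-e_j)$. Passing to $\lambda_{\max}$ yields the key one-sided bound $\lambda_{\max}(F'(x)d)\ge\Vert d\Vert_\infty\,\lambda_x$. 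This is the step doing the real work; everything else is bookkeeping around the sign of the dominant component.

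For part~(a) the hypothesis is exactly $\lambda_x>0$. Given $0\ne d$, I would pick $j$ with $|d_j|=\Vert d\Vert_\infty>0$, so $d_j\ne 0$. If $d_j<0$ the core estimate gives $\Vert F'(x)d\Vert_2\ge\lambda_{\max}(F'(x)d)\ge\lambda_x\Vert d\Vert_\infty$; if $d_j>0$ I apply the estimate to $-d$ and use $\Vert F'(x)(-d)\Vert_2=\Vert F'(x)d\Vert_2$. Either way \cref{lemeq} holds, and since its right-hand side is strictly positive, $F'(x)d=0$ forces $d=0$, giving injectivity.

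For part~(b) I would set $d:=y-x$ and $G:=F(y)-F(x)$. Applying the convexity inequality of Lemma~\ref{monoton} once with $(\gamma,\gamma^{(0)})=(y,x)$ and once with $(x,y)$ sandwiches $G$ as $F'(x)d\preceq G\preceq F'(y)d$. Choosing $j$ with $|d_j|=\Vert d\Vert_\infty$: if $d_j<0$, the left inequality together with the core bound at $x$ gives $\lambda_{\max}(G)\ge\lambda_{\max}(F'(x)d)\ge\lambda_x\Vert d\Vert_\infty$; if $d_j>0$, the right inequality rewritten as $-G\succeq F'(y)(-d)$ together with the core bound at $y$ applied to $-d$ gives $-\lambda_{\min}(G)=\lambda_{\max}(-G)\ge\lambda_y\Vert d\Vert_\infty$. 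In both cases $\Vert G\Vert_2\ge\min\{\lambda_x,\lambda_y\}\,\Vert d\Vert_\infty=\lambda\Vert x-y\Vert_\infty$, which is the assertion. I expect the main obstacle to be nothing deep but rather matching the sign of $d_j$ to the correct side of the sandwich — a lower bound through $F'(x)d$ when $d_j<0$, an upper bound through $F'(y)d$ (equivalently a lower bound on $-G$) when $d_j>0$ — so that the one-sided eigenvalue estimate applies; this sign bookkeeping is the place where an error is most likely to creep in.
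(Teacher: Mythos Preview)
Your proof is correct and follows essentially the same route as the paper: normalize so that one component of $d$ (or $\pm d$) equals $-1$, compare to $e_j'-e_j$ via monotonicity of $F'$, and for part~(b) combine this with the convexity inequality at the appropriate base point. The only cosmetic difference is that you isolate the one-sided eigenvalue bound as a separate ``core estimate'' and write the two convexity applications as a single sandwich $F'(x)d\preceq F(y)-F(x)\preceq F'(y)d$, whereas the paper does both steps inline in each case; the sign bookkeeping and the pairing of the sign of $d_j$ with the choice of base point match the paper exactly.
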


\begin{proof}
This proof is completely analogous to \cite[Lemma 1]{harrach2022solving}. But since it is short and simple, we state it for the sake of completeness.
\begin{enumerate}
\item[(a)] Let $d\in \R^n$ with $\Vert d\Vert_\infty=1$. Then at least one of the entries of $d$ must be either $1$ or $-1$, so that there exists $j\in \lbrace 1,\ldots,n \rbrace$ with either
\[ d\leq e_j'-e_j \quad \textrm{or} \quad -d\leq e_j'-e_j.  \]
Since $F$ is monotonically non-increasing we obtain
\[ F'\left(x \right)d \succeq F'\left(x \right)\left( e_j'-e_j \right) \quad \textrm{or} \quad F'\left(x \right)(-d) \succeq F'\left(x \right)\left( e_j'-e_j \right) \]
for every $x\in [a,b]^n$ so that $F'\left(x \right)d$ has at least one positive eigenvalue that is larger than $\lambda_{\max} \left(F'\left(x \right)\left( e_j'-e_j \right)\right)$ or smaller than $-\lambda_\textrm{max} \left(F'\left(x \right)\left( e_j'-e_j \right)\right)$ for every $j=1,\ldots,n$. This proves $(a)$.
\item[(b)] With the same argument as in $(a)$ we have for $x\neq y$ either
 \[ \frac{x-y}{\Vert x-y \Vert_\infty}\leq e_j'-e_j \quad \textrm{or} \quad \frac{y-x}{\Vert x-y \Vert_\infty}\leq e_j'-e_j.  \]
 Hence by monotonicity and convexity we have that either
\begin{equation*}
\begin{array}{rl}
\displaystyle \frac{F(x)-F(y)}{\Vert x-y \Vert_\infty} &\succeq F'\left(y \right)\frac{x-y}{\Vert x-y \Vert_\infty} \succeq F'\left(y \right)\left( e_j'-e_j \right) \quad \textrm{or} \\[12pt]
\displaystyle \frac{F(y)-F(x)}{\Vert x-y \Vert_\infty} &\succeq F'\left(x \right)  \frac{y-x}{\Vert x-y \Vert_\infty} \succeq F'\left(x \right)\left( e_j'-e_j \right),
\end{array}
\end{equation*}
 which shows that
 \[ \Vert F(x)-F(y) \Vert_2 \geq \lambda_y \Vert x-y \Vert_\infty \quad \textrm{or} \quad \Vert F(x)-F(y) \Vert_2 \geq \lambda_x \Vert x-y \Vert_\infty,  \]
 so that (b) is proven.
\end{enumerate}
\end{proof}
For inverse coefficient problems with finitely many measurements, the assumption
\begin{equation} \label{localizedpotentialproperty}
\mathcal{F}'\left(x \right)\left(  e_j'-e_j \right)\npreceq 0 
\end{equation} 
can be interpreted in the sense of localized potentials, since
\[ I^T\mathcal{F}'\left(x \right)\left( e_j'-e_j \right)I= \int_{\Gamma_i}  \left( u_\gamma^{(I)}\right)^2 \dx - \sum_{j=1 \atop j \neq i} \int_{\Gamma_j}  \left( u_\gamma^{(I)}\right)^2 \dx  \]
for all $I\in \R^m$. This expression is positive if one can ensure that the solution is large on some part of $\Gamma$ and small on the other parts.
For some applications such as the idealized Robin problem (\cite[Section 3]{harrach2022solving}) it follows from \cite[Theorem 2(b)]{harrach2022solving} that these assumptions hold for all $\gamma\in [a,b]^n$ when enough measurements are being used, i.e. if $m$ is suffenciently large. For the shunt model \cref{robineqn} it remains to be shown that potentials can be localized with a sufficient number of electrodes, such that the assumptions of Theorem \ref{uniqshunt} are fulfilled. In \cite{hyvonen2009approximating} the authors provide  approximation results of the continuum
model by the complete electrode model. These should hold analogously for the shunt electrode model in the Robin case.

\subsection{Checking the assumption with only finitely many tests} \label{section3}
To practically determine how many measurements are enough, we will now rewrite the assumption so that they require evaluations of $\mathcal{F}'$ for only finitely many points. We finalize the proof of Theorem \ref{uniqshunt} by generalizing Lemma 2 in \cite{harrach2022solving}. Therefore we argue that if one checks the localized potential property \cref{localizedpotentialproperty} for finitely many points, then the assertions of Lemma \ref{lem11} directly follow for the whole required domain. The key idea is that we can replace the evaluation points of $F'$ by using the convexity property
\[ F'(x)(y-x)\succeq F(y)-F(x)\succeq F'(y)(y-x) \]
and the direction of the derivative by the monotonicity property
\[ F'(x)d_2 \succeq F'(x)d_1 \qquad \textrm{for}~ d_1 \geq d_2. \]
\begin{lemma} \label{lemma2}
Let $F: \mathbb{R}_{+}^n \rightarrow \mathbb{S}_m, n, m \geq 2$, be continuously differentiable, convex and monotonically non-increasing, $b \geq a>0$, $C>0$, $ d_j:=\frac{2b-a}{a}Ce_j' - \frac{1}{2}   e_j\in \R^n
$. Then, for all $j \in\{1, \ldots, n\}$ and $x \in[a, b]^n$, there exists $t \in$ $\left[a+\frac{a}{2C}, b+\frac{a}{2C}\right] \subset \mathbb{R}$, so that for all $0 \leq \delta \leq \frac{a}{4C}$,
\[
F^{\prime}(x)\left( Ce_j^{\prime}-e_j\right) \succeq F^{\prime}\left(\frac{a}{2} e_j^{\prime}+(t-\delta) e_j\right) d_j .
\]
\end{lemma}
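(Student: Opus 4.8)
The plan is to bound the arbitrary-point derivative $F'(x)(Ce_j'-e_j)$ below by the fixed-point derivative $F'(p)d_j$, where $p:=\frac a2 e_j'+(t-\delta)e_j$, by inserting a single auxiliary direction proportional to $x-p$ and chaining monotonicity, convexity, and monotonicity again. Concretely, I would fix $j$ and $x\in[a,b]^n$ and set
\[ t:=x_j+\frac{a}{2C}, \qquad \theta:=\frac{2C}{a}, \]
so that $t\in[a+\frac{a}{2C},\,b+\frac{a}{2C}]$ because $x_j\in[a,b]$. Writing $\eta:=\frac{a}{2C}-\delta$, the hypothesis $0\le\delta\le\frac{a}{4C}$ means $\eta\in[\frac{a}{4C},\frac{a}{2C}]$. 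With this choice $p$ has every off-$j$ component equal to $\frac a2$ and $j$-th component $x_j+\eta$, so that $x-p$ has components $x_i-\frac a2\in[\frac a2,\,b-\frac a2]$ for $i\neq j$ and $-\eta$ in the $j$-th slot; in particular $p\in\R^n_+$ since $p_i=\frac a2>0$ and $p_j=x_j+\eta>0$.

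The core of the argument is the componentwise sandwich
\[ Ce_j'-e_j \;\le\; \theta(x-p) \;\le\; d_j, \]
which I would check slot by slot. In each off-$j$ component the quantity $\theta(x_i-\frac a2)$ lies between $C$ (the value at $x_i=a$, since $\theta\cdot\frac a2=C$) and $\frac{2b-a}{a}C$ (the value at $x_i=b$, since $\theta(b-\frac a2)=\frac{(2b-a)C}{a}$); in the $j$-th component $-\theta\eta$ lies between $-1$ and $-\frac12$ precisely because $\eta$ ranges over $[\frac{a}{4C},\frac{a}{2C}]$ and $\theta\eta\in[\frac12,1]$. Granting the sandwich, the monotonicity of $F$ gives $F'(x)(Ce_j'-e_j)\succeq F'(x)(\theta(x-p))$ and $F'(p)(\theta(x-p))\succeq F'(p)d_j$, while the convexity of $F$ — in the form of gradient monotonicity $F'(x)(x-p)\succeq F'(p)(x-p)$, obtained by adding the two first-order inequalities recalled above with the roles of the base point swapped — bridges the two estimates through the common direction $x-p$. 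Scaling the convexity estimate by $\theta>0$ and concatenating yields
\[ F'(x)(Ce_j'-e_j)\;\succeq\;\theta\,F'(x)(x-p)\;\succeq\;\theta\,F'(p)(x-p)\;\succeq\;F'(p)d_j, \]
which is exactly the asserted inequality, valid for every $\delta\in[0,\frac{a}{4C}]$ with the single $t$ fixed above.

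The only genuine work, and the place where the constants $\frac a2$, $\frac{2b-a}{a}$, $\frac{a}{2C}$ and $\frac{a}{4C}$ earn their keep, is the sandwich: one must produce a \emph{single} scaling $\theta$ that simultaneously dominates $Ce_j'-e_j$ and is dominated by $d_j$ across all $n$ coordinates and for the whole admissible range of $\delta$. I expect this bookkeeping to be the main obstacle, since the off-$j$ slots force $\theta\ge\frac{2C}{a}$ (worst case $x_i=a$) whereas the $j$-slot forces $\theta\le\frac1\eta$, and these two requirements are compatible exactly on $\eta\le\frac{a}{2C}$; the lower endpoint $\eta\ge\frac{a}{4C}$ is then what secures $\theta\eta\ge\frac12$, i.e.\ the comparison with the $-\frac12$ entry of $d_j$. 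This is also what pins down the length $\frac{a}{4C}$ of the $\delta$-interval, which is precisely the spacing later needed so that finitely many evaluation points $z_{j,k}$ cover all of $[a,b]^n$. Once the four scalar estimates are verified, the monotonicity/convexity chain is routine and uses only that $x,p\in\R^n_+$.
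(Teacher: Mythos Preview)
Your proof is correct and follows essentially the same approach as the paper: you choose the same $t=x_j+\frac{a}{2C}$ and the same scaling $\theta=\frac{2C}{a}$, verify the identical componentwise sandwich $Ce_j'-e_j\le\theta(x-p)\le d_j$, and then chain monotonicity--convexity--monotonicity exactly as the paper does (the paper writes the convexity step via the intermediate quantity $F(x)-F(p)$, which is the same as your ``gradient monotonicity'' once the two first-order inequalities are added).
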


\begin{proof}
Let $j \in\{1, \ldots, n\}$ and $x \in[a, b]^n$. We define $t:=x_j+\frac{a}{2c}$. Then for all $0 \leq \delta \leq \frac{a}{4c}$
\[
\begin{array}{rl}
C e_j^{\prime}-e_j &= \displaystyle \frac{2C}{a}\left(\frac{a}{2} e_j^{\prime}+(x_j - t)e_j\right) \leq \frac{2C}{a}\left(x - \frac{a}{2} e_j^{\prime} - t e_j\right) \\[10pt]
& \leq \displaystyle \frac{2C}{a}\left(x - \left(\frac{a}{2} e_j^{\prime} + (t - \delta)e_j\right)\right)
\end{array}
\]
and
\[
\begin{array}{rl}
& \displaystyle \frac{2C}{a}\left(x - \left(\frac{a}{2} e_j^{\prime} + (t - \delta)e_j\right)\right) \leq \displaystyle \frac{2C}{a}\left(\left(b - \frac{a}{2}\right)e_j^{\prime} + \left(x_j - t + \delta\right)e_j\right) \\[10pt]
& \quad = \displaystyle \frac{2b - a}{a}(n - 1)e_j^{\prime} + \frac{2(n - 1)}{a}\left(-\frac{a}{2C} + \delta\right)e_j \\
& \quad \leq \displaystyle \frac{2b - a}{a}C e_j^{\prime} - \frac{1}{2} e_j = d_j.
\end{array}
\]
so that we obtain from monotonicity and convexity

%\[
%\begin{array}{rl}
%& F^{\prime}(x)\left(C e_j^{\prime} - e_j\right) \\[10pt]
%& \quad \succeq \displaystyle \frac{2C}{a} F^{\prime}(x) \left(x - \left(\frac{a}{2} e_j^{\prime} + (t - \delta)e_j\right)\right)
%\end{array}
%\]
\[
\begin{array}{rl}
F^{\prime}(x)&\hspace{-2ex}\left(C e_j^{\prime} - e_j\right)  \succeq \displaystyle \frac{2C}{a} F^{\prime}(x) \left(x - \left(\frac{a}{2} e_j^{\prime} + (t - \delta)e_j\right)\right)
\\[5pt]
& \succeq \displaystyle \frac{2C}{a} \left(F(x) - F\left(\frac{a}{2} e_j^{\prime} + (t - \delta)e_j\right)\right) \\[10pt]
& \succeq \displaystyle \frac{2C}{a} F^{\prime}\left(\frac{a}{2} e_j^{\prime} + (t - \delta)e_j\right)\left(x - \left(\frac{a}{2} e_j^{\prime} + (t - \delta)e_j\right)\right) \\[10pt]
& \succeq F^{\prime}\left(\frac{a}{2} e_j^{\prime} + (t - \delta)e_j\right) d_j.
\end{array}
\]
\end{proof}

\begin{corollary}\label{coro2}
Let $C>0$ and $j \in\{1, \ldots, n\}$. Choose $K\geq 2$ so that $a+K \frac{a}{4C} \geq b+\frac{a}{4C}$. If
\[
\left(F^{\prime}\left(z_{j, k}\right)\left(d_j\right)\right)\npreceq 0 \qquad \textrm{for all}~ k\in \lbrace 2,\ldots,K\rbrace,
\]
with $z_{j,k}=\frac{a}{2}e_j' + \left(a+k\frac{a}{4C}\right)  e_j\in \R^n_+, ~ d_j=\frac{2b-a}{a}Ce_j' - \frac{1}{2}   e_j\in \R^n
$, then
\[
F^{\prime}(x)\left( Ce_j^{\prime}-e_j\right)  \npreceq 0 \qquad \textrm{for all}~ x\in [a,b]^n.
\]
\end{corollary}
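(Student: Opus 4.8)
The plan is to apply Lemma \ref{lemma2} pointwise and then exploit that a \emph{not negative semidefinite} matrix remains not negative semidefinite under the Loewner order $\succeq$. Fix $j\in\{1,\ldots,n\}$ and $x\in[a,b]^n$. Lemma \ref{lemma2} supplies the value $t=x_j+\frac{a}{2C}\in[a+\frac{a}{2C},\,b+\frac{a}{2C}]$ together with the inequality
\[
F'(x)\left(Ce_j'-e_j\right)\succeq F'\!\left(\tfrac{a}{2}e_j'+(t-\delta)e_j\right)d_j
\qquad\textrm{for every }0\le\delta\le\tfrac{a}{4C}.
\]
It therefore suffices to produce one admissible $\delta$ for which the evaluation point coincides with a grid point $z_{j,k}$ with $k\in\{2,\ldots,K\}$: then $F'(x)(Ce_j'-e_j)\succeq F'(z_{j,k})d_j$, and since the right-hand side is not negative semidefinite by hypothesis, i.e. $v^{T}F'(z_{j,k})d_j\,v>0$ for some $v$, the same $v$ certifies $v^{T}F'(x)(Ce_j'-e_j)v>0$. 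As $x$ is arbitrary this yields the conclusion for all of $[a,b]^n$.

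The core of the argument is the matching step. The evaluation points $\frac{a}{2}e_j'+(t-\delta)e_j$ differ from the grid points $z_{j,k}=\frac{a}{2}e_j'+(a+k\frac{a}{4C})e_j$ only in the $e_j$-coordinate, and as $\delta$ runs through $[0,\frac{a}{4C}]$ this coordinate sweeps the closed interval $[t-\frac{a}{4C},\,t]$ of length exactly $\frac{a}{4C}$, which equals the spacing of the grid values $a+k\frac{a}{4C}$. Consequently the interval contains at least one such grid value; equivalently, writing $\alpha:=\frac{4C}{a}(t-a)=\frac{4C}{a}(x_j-a)+2$, I would locate an integer $k\in[\alpha-1,\alpha]\cap\{2,\ldots,K\}$, whereupon $\delta:=t-a-k\frac{a}{4C}\in[0,\frac{a}{4C}]$ realizes the match.

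The delicate—and main—obstacle is verifying that this integer can always be chosen in $\{2,\ldots,K\}$, which forces a look at the boundary cases $x_j=a$ and $x_j=b$. The length-one interval $[\alpha-1,\alpha]$ always contains the two candidate integers $\lfloor\alpha\rfloor$ and $\lceil\alpha\rceil-1$ (they coincide away from the integers). Since $\alpha\ge 2$ one gets $\lfloor\alpha\rfloor\ge 2$, which secures the lower bound; for the upper bound I would combine $\lfloor\alpha\rfloor\le\lfloor\frac{4C(b-a)}{a}\rfloor+2$ with the defining inequality $a+K\frac{a}{4C}\ge b+\frac{a}{4C}$, that is $K\ge\frac{4C(b-a)}{a}+1$. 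This covers every case except the degenerate one in which $\frac{4C(b-a)}{a}$ is an integer and $x_j=b$, where $\lfloor\alpha\rfloor$ overshoots by one; there one selects instead the other candidate $\lceil\alpha\rceil-1=K$, which lies in $\{2,\ldots,K\}$ because $\frac{4C(b-a)}{a}>0$ forces it to be $\ge 2$. Confirming that these two candidates jointly exhaust all $x\in[a,b]^n$ is a finite arithmetic check and presents no conceptual difficulty once the sweeping-versus-spacing picture is in place.
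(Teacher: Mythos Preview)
Your argument is correct and follows the same approach as the paper: invoke Lemma~\ref{lemma2} to reduce to finding some $k\in\{2,\ldots,K\}$ with $\delta=t-(a+k\frac{a}{4C})\in[0,\frac{a}{4C}]$, then use that $A\succeq B$ and $B\npreceq 0$ imply $A\npreceq 0$. The paper handles the matching step with the one-line observation that the sequence $\delta_k$ starts at $\delta_2\ge 0$, ends at $\delta_K\le\frac{a}{4C}$, and decreases in steps of size $\frac{a}{4C}$, whereas you carry out the equivalent integer-in-$[\alpha-1,\alpha]$ bookkeeping more explicitly, including the boundary case $x_j=b$; both are valid.
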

\begin{proof}
Since $a+K \frac{a}{4C} \geq b+\frac{a}{4C}$, we have for every $t \in\left[a+\frac{a}{2C}, b+\frac{a}{2C}\right]$ that $\left(t-\left(a+K \frac{a}{4C}\right)\right)\leq \frac{a}{4C} $ and $0 \leq \left(t-\left(a+ 2\frac{a}{4C}\right)\right)$. Therefore there exists $k \in \{2, \ldots, K\}$ so that
\[
\delta:=t-\left(a+k \frac{a}{4C}\right) \quad \textrm{ fulfills } \quad 0 \leq \delta \leq \frac{a}{4C},
\]
since one will certainly land in $\lbrack  0 , \frac{a}{4C} \rbrack$ with step size $\left(\frac{a}{4C}\right) $.
The asseration follows from Lemma \ref{lemma2}.
\end{proof}

\begin{remark}
Note that 
\[a+K \frac{a}{4C} \geq b+\frac{a}{4C}\]
holds for $C=1$ with $K\geq\lceil\frac{4b}{a}\rceil-3$. For $C=(n-1)$ we need $K\geq\lceil\frac{4(n-1)b}{a}\rceil-4n-3$.
\end{remark}

\begin{thm}\label{uniqshunt2}
Let $m,n\in \N$, $n,m\geq 2$ and $F\colon \R^n_+ \to \mathbb{S}_m $ be continously differentiable, convex and monotonically non-increasing. If 
\[F'\left(z_{j,k}\right)d_j\npreceq 0 \quad \textrm{for all}\quad k\in \lbrace 2,\ldots,K\rbrace,~ j\in \lbrace 1,\ldots,n \rbrace, \]
where
\[
z_{j,k}:=\frac{a}{2}e_j' + \left(a+k\frac{a}{4}\right)  e_j\in \R^n_+, ~ d_j:=\frac{2b-a}{a}e_j' - \frac{1}{2}   e_j\in \R^n
\]  
and  $K:=\max\left(\lceil\frac{4b}{a}\rceil-3,2\right)$, then the following holds:
\begin{itemize}
\item[(a)] $F'(x)\in \mathcal{L}\left( \R^n, \mathbb{S}_m \right)$ is injective for all $x\in [a,b]^n$ and for all $0\neq d\in \R^n$,
\[ \frac{\Vert F'(x)d\Vert_2}{\Vert d\Vert_\infty}\geq \lambda := \min_{j=1,\ldots,n \atop k=2,\ldots,K}\lambda_{\max} \left(F'(z_{j,k})d_j\right)>0. \]
\item[(b)] $F\colon \R^n_+ \to \mathbb{S}^m $ is injective and
\[ \Vert F(x_1)-F(x_2) \Vert_2 \geq \lambda \Vert x_1-x_2 \Vert_\infty \]
for all $x_1,x_2\in [a,b]^n$ i.e. the inverse problem
\begin{equation*}
\textrm{determine } \hat{x} \in [a,b]^n \textrm{ from the knowledge of } F(\hat{x}).
\end{equation*}
is uniquely solvable.
\end{itemize}
\end{thm}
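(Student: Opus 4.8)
The statement is an abstract, coordinate-free version of Theorem~\ref{uniqshunt}, and the plan is to obtain it by chaining the two preceding lemmas together, specialised to the constant $C=1$. Essentially all of the difficulty has already been front-loaded into Corollary~\ref{coro2} and Lemma~\ref{lem11}; what remains is to check that the hypotheses line up and that the resulting stability constant can be taken uniform over the cube $[a,b]^n$.

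First I would verify that the data of the theorem are admissible for Corollary~\ref{coro2} with $C=1$. The points $z_{j,k}$ and directions $d_j$ stated here are exactly those of the corollary when $C=1$, so the hypothesis $F'(z_{j,k})d_j\npreceq 0$ for $k\in\{2,\dots,K\}$ is the corollary's hypothesis verbatim. It then remains to see that $K$ is large enough, namely that $K\geq 2$ and $a+K\frac{a}{4}\geq b+\frac{a}{4}$; by the remark following Corollary~\ref{coro2} the latter holds as soon as $K\geq\lceil\frac{4b}{a}\rceil-3$, and the choice $K=\max(\lceil\frac{4b}{a}\rceil-3,2)$ secures both at once. Applying Corollary~\ref{coro2} for each $j\in\{1,\dots,n\}$ then gives
\[ F'(x)(e_j'-e_j)\npreceq 0 \qquad \textrm{for all } x\in[a,b]^n,\ j\in\{1,\dots,n\}, \]
which is precisely the hypothesis required to invoke Lemma~\ref{lem11} at every point of the cube.

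With this established, part~(a) follows from Lemma~\ref{lem11}(a): for each fixed $x\in[a,b]^n$ the map $F'(x)$ is injective and $\Vert F'(x)d\Vert_2/\Vert d\Vert_\infty\geq\lambda_x:=\min_j\lambda_{\max}(F'(x)(e_j'-e_j))>0$. The one genuinely new step is to replace the $x$-dependent constant $\lambda_x$ by the uniform constant $\lambda=\min_{j,k}\lambda_{\max}(F'(z_{j,k})d_j)$, and this is where I expect the only real obstacle. Here I would reuse the intermediate inequality produced inside Lemma~\ref{lemma2} and Corollary~\ref{coro2}: for every $x$ and $j$ there is an index $k=k(x,j)\in\{2,\dots,K\}$ with $F'(x)(e_j'-e_j)\succeq F'(z_{j,k})d_j$. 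Since $\lambda_{\max}$ is monotone with respect to the Loewner order, this yields $\lambda_{\max}(F'(x)(e_j'-e_j))\geq\lambda_{\max}(F'(z_{j,k})d_j)\geq\lambda$, hence $\lambda_x\geq\lambda>0$ uniformly in $x$.

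Finally, feeding these uniform bounds into Lemma~\ref{lem11}(b) gives $\Vert F(x_1)-F(x_2)\Vert_2\geq\min\{\lambda_{x_1},\lambda_{x_2}\}\Vert x_1-x_2\Vert_\infty\geq\lambda\Vert x_1-x_2\Vert_\infty$ for all $x_1,x_2\in[a,b]^n$, which is the Lipschitz lower bound of part~(b); injectivity on $[a,b]^n$, and hence unique solvability of the inverse problem, is then immediate. Apart from the uniformity argument above, the proof is a direct, bookkeeping-level assembly of the preceding results, so I do not anticipate any further complications.
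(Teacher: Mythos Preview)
Your proposal is correct and follows exactly the route the paper takes: the paper's own proof consists of the single sentence ``Follows directly from Lemma~\ref{lem11} and Corollary~\ref{coro2}.'' You have simply unpacked this, and in particular made explicit the one nontrivial point the paper leaves implicit, namely that the Loewner inequality $F'(x)(e_j'-e_j)\succeq F'(z_{j,k})d_j$ produced in the proof of Lemma~\ref{lemma2}/Corollary~\ref{coro2} upgrades the pointwise constants $\lambda_x$ to the uniform constant $\lambda$ via monotonicity of $\lambda_{\max}$.
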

\begin{proof}
Follows directly from Lemma \ref{lem11} and Corollary \ref{coro2}.
\end{proof}
This completes the proof of Theorem \ref{uniqshunt}, since the constructed forward map $\mathcal{F}$ is continously differentiable, convex and monotonically non-increasing by Lemma \ref{monoton}. Note that if $\mathcal{F}$ fulfills the assumptions of Theorem \ref{mainsemidefishunt}, then $\mathcal{F}$ is already injective, i.e. the inverse problem of reconstructing ${\hat \gamma}$ from $\mathcal{F}({\hat \gamma})$ is uniquely solvable, since Theorem \ref{uniqshunt2} holds for both $C=1$ and for $C=(n-1)$. 

\section{Numerical results}
\label{sec:experiments}

In Section \ref{sec:main}, we established a criterion to verify whether a given number of electrodes is sufficient to guarantee the unique solvability of the inverse Robin problem. Additionally, we reformulated the reconstruction problem as a convex semidefinite optimization problem.

We have managed to overcome the non-convexity of the natural data fitting approach, as illustrated in Figure \ref{fig2444}. For a given resolution and electrodes of fixed sizes and positions, \cref{criterion1} can be used to verify unique solvability, while \cref{criterion2} ensures an equivalent convex reformulation. However, there is no general guarantee that these criteria will be met with a finite number of electrodes. It remains to be numerically verified that Criteria \cref{criterion1} and \cref{criterion2}
\begin{equation*}
\lambda =\min_{j=1,\ldots,n \atop k=2,\ldots,K}\lambda_{\max}\left(\mathcal{F}'\left(z_{j,k}\right)d_j\right)>0
\end{equation*}
hold, with $z_{j,k}$ and $d_j$ chosen accordingly for all $ k\in \lbrace 2,\ldots,K\rbrace,~ j\in \lbrace 1,\ldots,n \rbrace$ as in Theorem \ref{uniqshunt} and Theorem \ref{mainsemidefishunt}. For this purpose, we consider a simple geometry of a small circle in a large circle.  The boundary $\Gamma$ is partitioned into $\Gamma_1 \cup \ldots  \cup \Gamma_n$ with equidistant segments, and electrodes $\mathcal{E}_1,\ldots,\mathcal{E}_m$ are uniformly placed along the boundary, as illustrated in Figure \ref{fig44}.\\

Note that  that Criteria \cref{criterion1} and \cref{criterion2} provide upper bounds for the inverse stability constant $1/\lambda$. Once uniqueness and global convergence are established, efforts can be made toward further improving the stability constant $\lambda$. In general, the bound obtained from Theorem \ref{uniqshunt} is more favorable than that from Theorem \ref{mainsemidefishunt}, as Theorem \ref{mainsemidefishunt}  considers more evaluation points than Theorem \ref{uniqshunt}. As illustrated in Figure \ref{fig34}, the value of $\lambda$ improves by taking more electrodes. % Numerical experiments lead to the assumption, that the estimations to calculate $\lambda$ in Theorem \ref{uniqshunt} and Theorem \ref{mainsemidefishunt} are quite rough and can be improved substantially as one can see in Figure \ref{lambdn3a80b120}, where a lower bound for $1/\lambda$ is computed.
 Nevertheless the ill-posedness of the inverse problem remains, so that $\lambda$ goes to $0$ as the resolution dimension $n$ increases.

\subsection{Implementation of the forward operator}

Motivated by \cite{harrach2021introduction} we discretize the forward map by calculating the element stiffness matrices $\mathcal{B}_0,\mathcal{B}_1,\ldots,\mathcal{B}_n \in \mathcal{S}^D$ and element load vectors $\mathrm{y}_1,\ldots,\mathrm{y}_m\in \R^D$, where $D$ is the dimension of the linear span $<\Lambda_1 ,\ldots,\Lambda_D > \subset H_\square^1(\Omega)$, the element stiffness matrices are given by
 \[ \mathcal{B}_0 = \left( \int_\Omega \sigma \nabla \Lambda_i \nabla \Lambda_j\dx \right)_{i,j=1,\ldots,D}, \quad \mathcal{B}_k = \left( \int_{\Gamma_k} \Lambda_i ~ \Lambda_j\ds\right)_{i,j=1,\ldots,D} \]
 for $k=1,\ldots,n$ and the element load vectors are given by
 \[ \mathrm{y}_k = \left( \sum_{j=1}^m \left(e_k\right)_j  \Lambda_i \big|_{\mathcal{E}_j}\right)_{i=1,\ldots,D} = \left(  \Lambda_i \big|_{\mathcal{E}_k}\right)_{i=1,\ldots,D} \]
for $k=1,\ldots,m$. Note that the chosen basis functions $\Lambda_i$ are constant along the electrodes, since $< \Lambda_1 ,\ldots,\Lambda_D >$ has to be a subspace of $H_\square^1(\Omega)$.
With $P:=\left(\mathrm{y}_1,\ldots,\mathrm{y}_m\right)$ the discretized forward map is given by
\[ \mathcal{F}\colon \R^n_+ \to \mathbb{S}^m_+, \qquad \gamma \mapsto P^T\left(  \mathcal{B}_0 + \sum_{i=1}^n \gamma_i\mathcal{B}_i \right)^{-1}P  \]
where the matrix $\left(\mathcal{B}_0 + \sum_{i=1}^n \gamma_i\mathcal{B}_i \right) $ is symmetric and positive definite, since the bilinear form $b_\gamma$ is symmetric and coercive. Note that the discretized forward map remains to be infinitely often differentiable with 
\begin{equation*}
 \mathcal{F}'(\gamma)\delta =  -\sum_{i=1}^n \delta_i P^T \left(  \mathcal{B}_0 + \sum_{j=1}^n \gamma_j\mathcal{B}_j \right)^{-T} B_{i} \left(  \mathcal{B}_0 + \sum_{j=1}^n \gamma_j\mathcal{B}_j \right)^{-1}P
\end{equation*}
by \cite[Lemma 3]{harrach2021introduction}. Moreover  $\mathcal{F}$ is 
monotonically non-increasing and convex by \cite[Lemma 4]{harrach2021introduction}.

%\subsection{Checking the conditions for a fixed geometry} 
\subsection{Condition for uniqueness and Lipschitz stability} 
In this section, we check for different resolutions whether there is a sufficient number of electrodes such that Condition \cref{criterion1} is satisfied. For all of our exemplary calculations we use the geometry of a small circle inside a large circle with an equidistant partition $\Gamma=\Gamma_1 \cup \ldots  \cup \Gamma_n$ and uniformly placed electrodes $\mathcal{E}_1,\ldots,\mathcal{E}_m$ at the boundary as in Figure \ref{fig44}. We compute 
\[\lambda =\min_{j=1,\ldots,n \atop k=2,\ldots,K}\lambda_{\max} \left(\mathcal{F}'(z_{j,k})d_j\right) \]
\begin{figure}[h]
    \centering
  {{\includegraphics[width=0.44\textwidth]{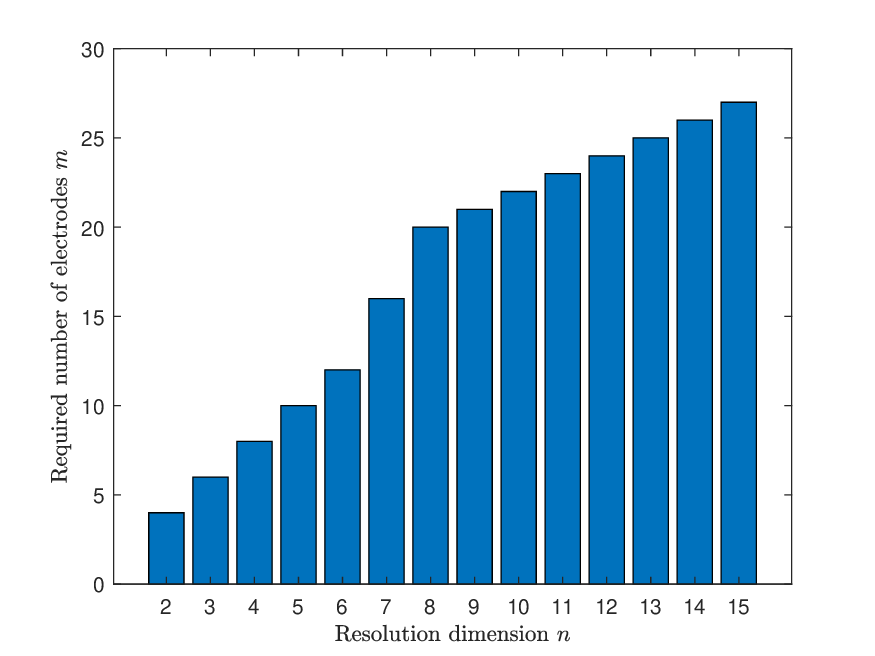} }}%
    \qquad
   {{\includegraphics[width=0.44\textwidth]{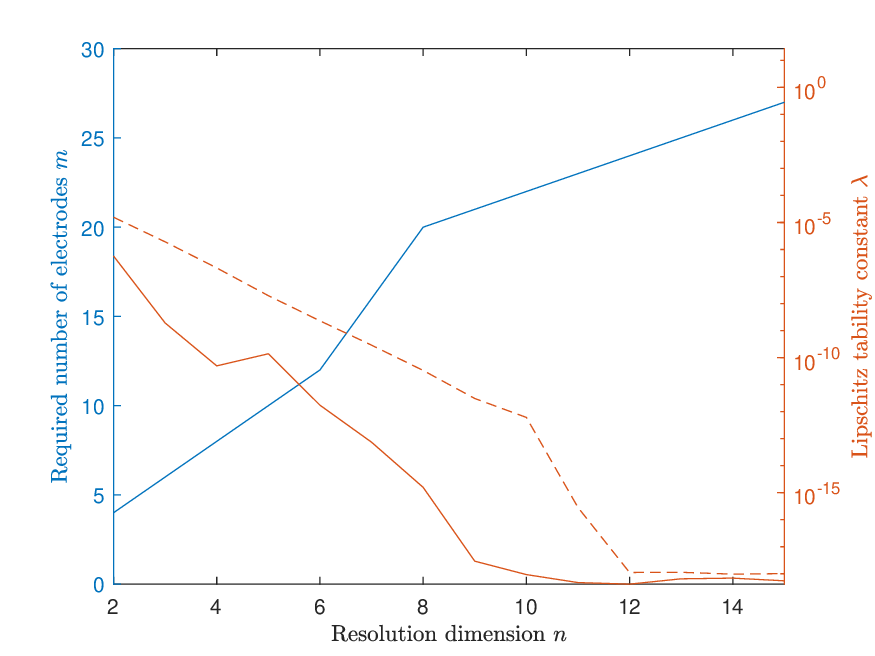} }}%
    \caption{Plot of the required number of electrodes $m$ for Criterion \cref{criterion1} and the stability constant $\lambda$ for different resolution dimensions $n$ with $a=1$ and $b=3$. The dashed line describes $\lambda$ calculated with five extra electrodes.}%
    \label{fig:example2}%
\end{figure}
with $z_{j,k},d_j$ for $ k\in \lbrace 2,\ldots,K\rbrace,~ j\in \lbrace 1,\ldots,n \rbrace$ chosen accordingly to Criterion \cref{criterion1} and check if $\lambda$ is positive. In Figure \ref{fig:example2}, we depict the smallest number electrodes so that $\lambda$ is positive and in particular Criterion \cref{criterion1} is satisfied. We therefore achieve unique reconstruction of the corrosion parameter and Lipschitz stability, i.e.
\[ \Vert \mathcal{F}(\gamma_1)-\mathcal{F}(\gamma_2) \Vert_2 \geq \lambda \Vert \gamma_1-\gamma_2 \Vert_\infty \qquad \textrm{for all} ~ \gamma_1,\gamma_2 \in [a,b]^n. \]

In Figure \ref{fig:example2}, we also compute $\lambda$ for the required number of electrodes and additionally with five extra electrodes to improve stability. However, as shown in Figure \ref{fig34}, for a fixed resolution, the stability constant $\lambda$ increases with the number of electrodes $m$.

\subsection{Condition for convex reformulation}

\begin{figure}[h]%
    \centering
   {{\includegraphics[width=0.44\textwidth]{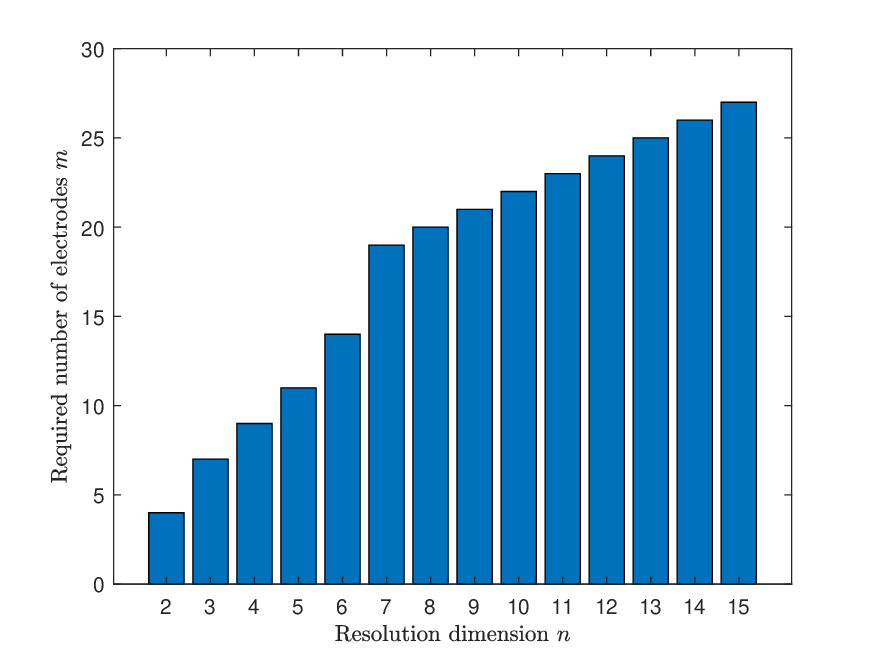} }}
    \qquad 
   {{\includegraphics[width=0.44\textwidth]{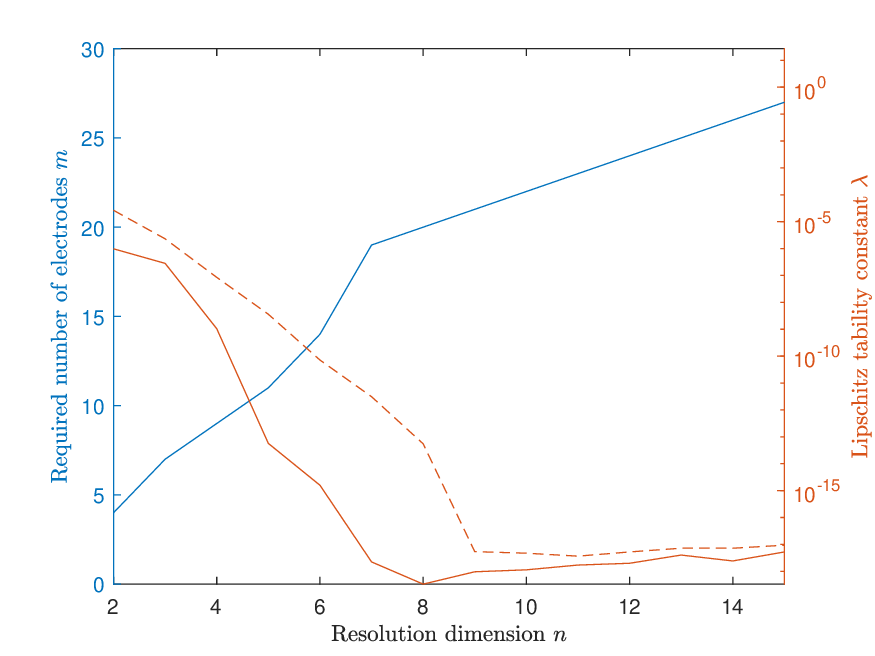} }}%
    \caption{Plot of the required number of electrodes $m$ for Criterion \cref{criterion2} and the stability constant $\lambda$ for different resolutions dimensions $n$ with $a=1$ and $b=3$. The dashed line describes $\lambda$ calculated with five extra electrodes.}%
    \label{fig24}%
\end{figure}

Uniqueness and Lipschitz stability do not guarantee the existence of a globally convergent reconstruction algorithm. To establish the convex semidefinite reformulation, it is necessary to verify Condition \cref{criterion2} from Theorem \ref{mainsemidefishunt} under the more restrictive assumptions. Figure \ref{fig24} illustrates the number of electrodes needed to satisfy Criterion \cref{criterion2} for a given resolution, along with the corresponding value of $\lambda$. We achieve a convex reformulation
\begin{equation}\label{semifinal}
\texttt{minimize} ~ \sum_{i=1}^n \gamma_i~ \textrm{subject to} ~ \gamma \in [a,b]^n, ~ \mathcal{F}(\gamma) \preceq \hat Y=\mathcal{F}(\hat{\gamma})
\end{equation}
\begin{figure}[h]
\begin{center}
\includegraphics[width=0.5\textwidth]{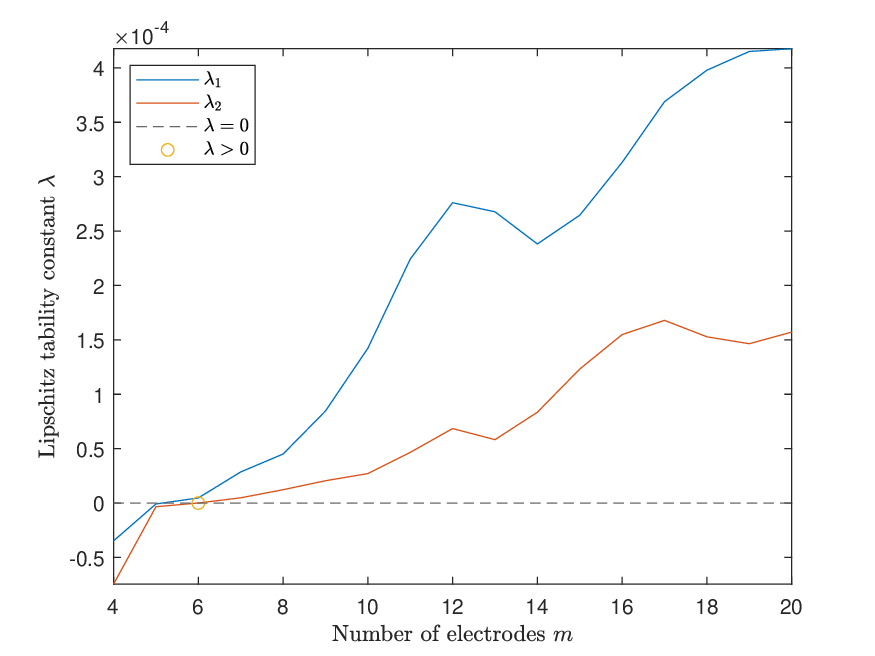}\\
\caption{The constants $\lambda_1$ from Theorem \ref{uniqshunt} and $\lambda_2$ from Theorem \ref{mainsemidefishunt} with a fixed resolution dimension $n=4$ and bounds $a=1$, $b=3$ for an increasing number of electrodes $m$.}\label{fig34}
\end{center}
\end{figure} 
of the inverse problem \cref{inverseproblem}. In this case, the inverse stability constant $\lambda$ decreases even more rapidly than in Condition \cref{criterion1}. This behavior is anticipated since Theorem \ref{mainsemidefishunt} requires verifying the criterion at more evaluation points.

In the setting of Theorem \ref{mainsemidefishunt}, the constant $\lambda$ is also growing with an increasing number of electrodes $m$, as illustrated in Figure \ref{fig34}. Notably, the value of $1/\lambda$ provides an upper bound for the inverse stability constant, since
\[ \Vert \mathcal{F}(\gamma_1)-\mathcal{F}(\gamma_2) \Vert_2 \geq \lambda \Vert \gamma_1-\gamma_2 \Vert_\infty \qquad \textrm{for all}~\gamma_1,\gamma_2\in [a,b]^n. \]
 %So $1/\lambda$ gives a sharper bound for the stability constant s
 % In Figure \ref{fig34} we also calculate a lower bound $\lambda_{approx}$ for the inverse stability by calculating the Lipschitz constant on a randomly chosen grid.

 Since Theorem \ref{mainsemidefishunt} requires evaluating the criterion at more points, the constant $\lambda$ obtained from Criterion \cref{criterion1} generally provides a sharper bound for the stability constant. However, it is important to note that the inverse stability constant, $1/\lambda$, inevitably tends to infinity as the resolution increases, regardless of the method used for its computation. This limitation is an inherent consequence of the ill-posedness of the inverse problem, which remains a fundamental challenge in the study of nonlinear inverse  coefficient problems. 
\begin{figure}[h]%
    \centering
    {{\includegraphics[width=0.42\textwidth]{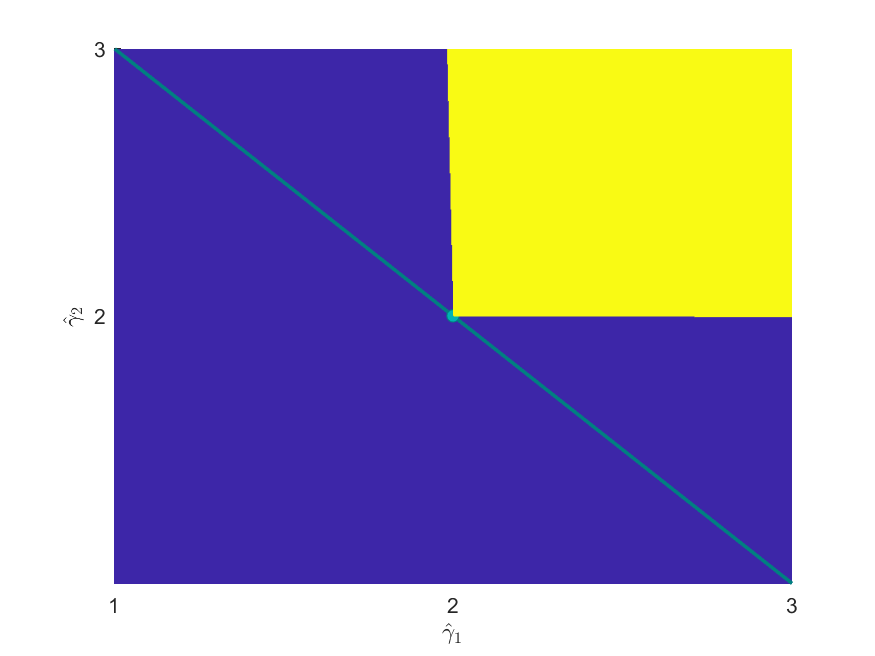} }}%
    \quad
    {{\includegraphics[width=0.42\textwidth]{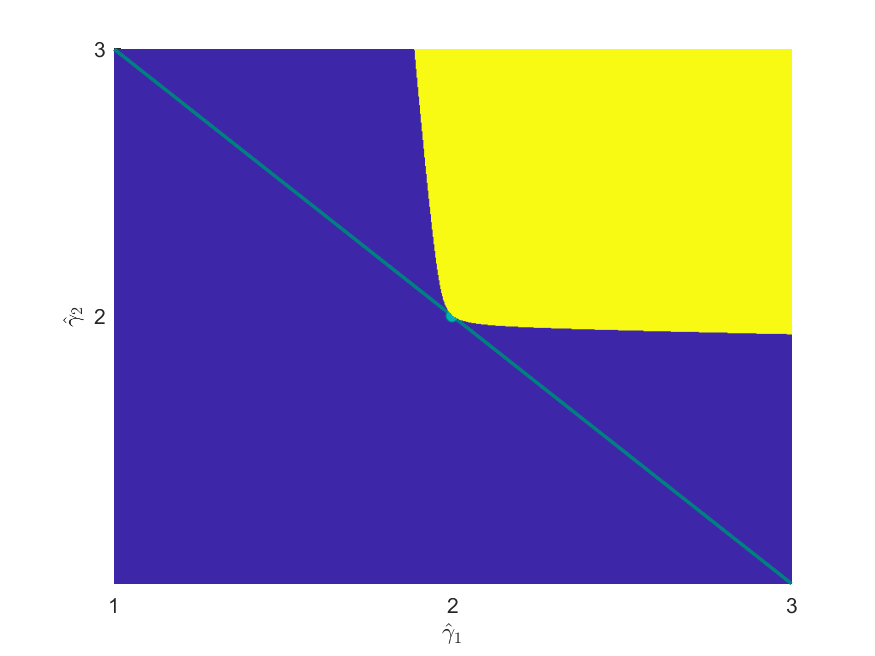} }}%
   \quad
 {{\includegraphics[width=0.42\textwidth]{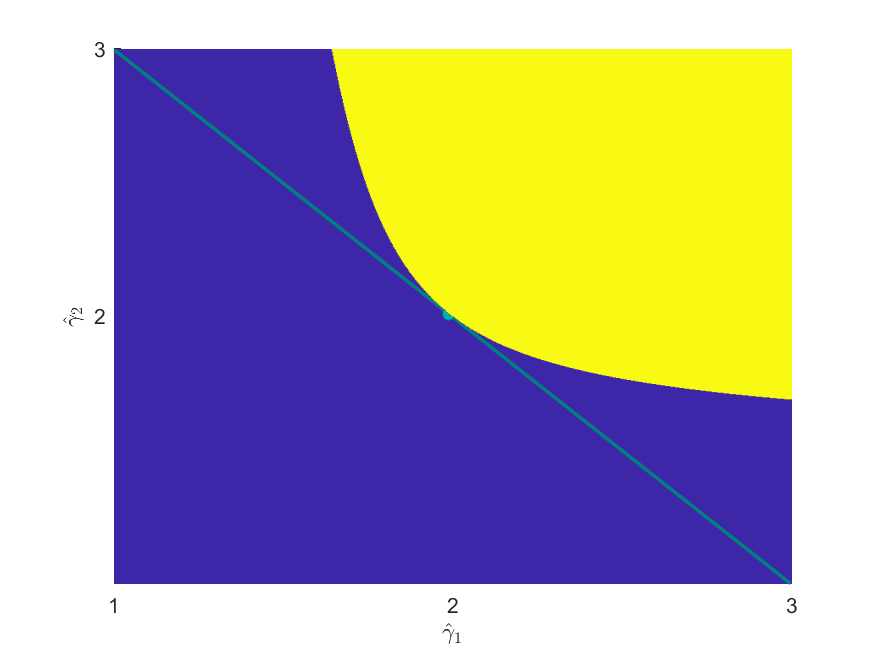} }}%
   \quad
 {{\includegraphics[width=0.42\textwidth]{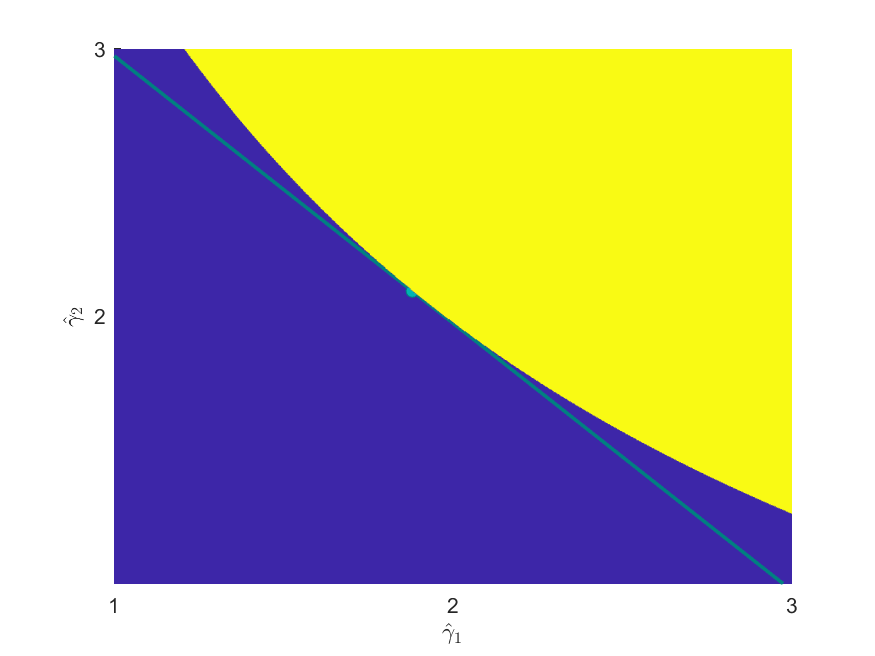} }}
    \caption{Plot of the admissible set without and with noise.}%
    \label{admnoise}%
\end{figure}  
\begin{exmp} In Figure \ref{admnoise}, we plot the admissible set
\[ \lbrace \gamma \in [a,b]^2 ~:~\mathcal{F}(\gamma)\preceq \hat Y\rbrace \]
for the convex problem \cref{semifinal}, considering the parameters $a=1$, $b=3$, $\hat \gamma=(2,2) $, a resolution dimension of $n=2$, and $m=4$ tested electrodes. Additionally, we add random noise to  $\hat Y=\mathcal{F}(\hat \gamma)$ such that $\Vert Y^\delta -\hat Y \Vert_{2} \leq \delta$, and compute
\[\lbrace \gamma \in [a,b]^2 ~:~\mathcal{F}(\gamma)\preceq Y^\delta+\delta I  \rbrace 
 \]
for $\delta=0,10^{-4},10^{-3},10^{-2}$.
\end{exmp}

\subsection{Numerical solver} 
We consider the discretized non-linear inverse problem
\begin{equation}\label{discreteinverseproblem}
\textrm{ reconstruct } \quad \hat{\gamma} \in \R_{+}^n \quad \textrm{ from } \quad \hat{Y}:=\mathcal{F}(\hat{\gamma}),
\end{equation}
where the discretized forward map $\mathcal{F}\colon \mathbb{R}_{+}^n \rightarrow \mathbb{S}_{+}^m \subset \mathbb{R}^{m \times m}$ is given by
$$
\mathcal{F}(\gamma)=P^T\left(  \mathcal{B}_0 + \sum_{i=1}^n \gamma_j\mathcal{B}_j \right)^{-1}P
$$
with $\mathcal{B}_j \in \mathbb{S}^D, j=1, \ldots, n$, and $P \in \mathbb{R}^{D \times m}$ defined as in Section \ref{sec:experiments}. We claim to have a global reconstruction algorithm by reformulating the inverse problem to a minimization problem with a linear objective function and constrains that are convex in the Loewner sense.

\begin{lemma} Let

$$
B_j:=\left(\begin{array}{cc}
\mathcal{B}_j & 0 \\
0 & 0
\end{array}\right) \in \R^{(D+m) \times(D+m)}, \quad \textrm{ and } \quad Y:=\left(\begin{array}{cc}
\mathcal{B}_0 & P \\
P^T & \hat{Y}
\end{array}\right) \in \mathbb{R}^{(D+m) \times(D+m)} .
$$

Then

$$
\mathcal{F}(\gamma) \preceq \hat{Y} \quad \textrm{ if and only if } \quad \gamma_1 B_1+\ldots+\gamma_n B_n+Y \succeq 0 .
$$

\end{lemma}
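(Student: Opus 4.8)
The plan is to recognize the claimed $(D+m)\times(D+m)$ matrix as a bordered (Schur complement) form built from $\mathcal{F}(\gamma)$, and then to invoke the standard characterization of positive semidefiniteness through the Schur complement.

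First I would carry out the block addition explicitly. Writing $M(\gamma):=\mathcal{B}_0+\sum_{j=1}^n\gamma_j\mathcal{B}_j$ and using that each $B_j$ carries $\mathcal{B}_j$ in its top-left $D\times D$ block and zeros elsewhere, one obtains
\[
\gamma_1 B_1+\ldots+\gamma_n B_n+Y=\begin{pmatrix} M(\gamma) & P \\ P^T & \hat{Y}\end{pmatrix}.
\]
The next step is to observe that $M(\gamma)$ is symmetric and positive definite for every $\gamma\in\R^n_+$: this is precisely the matrix appearing in the definition of the discretized forward map, and its positive definiteness is exactly the symmetry and coercivity of the bilinear form $b_\gamma$ recorded in Section \ref{sec:experiments}.

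With $M(\gamma)\succ 0$ available, I would apply the Schur complement criterion: for a symmetric block matrix whose top-left block $A$ is positive definite, $\begin{pmatrix} A & B \\ B^T & C\end{pmatrix}\succeq 0$ holds if and only if the Schur complement $C-B^T A^{-1}B\succeq 0$. Taking $A=M(\gamma)$, $B=P$ and $C=\hat{Y}$, the Schur complement equals
\[
\hat{Y}-P^T M(\gamma)^{-1}P=\hat{Y}-\mathcal{F}(\gamma),
\]
by the very definition $\mathcal{F}(\gamma)=P^T\bigl(\mathcal{B}_0+\sum_i\gamma_i\mathcal{B}_i\bigr)^{-1}P$. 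Therefore $\gamma_1 B_1+\ldots+\gamma_n B_n+Y\succeq 0$ is equivalent to $\hat{Y}-\mathcal{F}(\gamma)\succeq 0$, that is, to $\mathcal{F}(\gamma)\preceq\hat{Y}$, which is the assertion.

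Since the argument is a direct application of a standard linear-algebra identity, I do not expect a genuine obstacle. The only point demanding care is ensuring that $M(\gamma)$ is invertible, so that the Schur complement is well defined and the equivalence is a true ``if and only if'' rather than a one-sided implication; this is guaranteed for all admissible $\gamma\in\R^n_+$ by coercivity, which is also why the hypothesis restricts $\gamma$ to that range.
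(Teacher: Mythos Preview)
Your proof is correct and follows essentially the same approach as the paper: both identify the block matrix $\gamma_1 B_1+\cdots+\gamma_n B_n+Y$ as having Schur complement $\hat{Y}-\mathcal{F}(\gamma)$ with respect to the positive definite top-left block $\mathcal{B}_0+\sum_j\gamma_j\mathcal{B}_j$, and then invoke the standard Schur complement criterion for positive semidefiniteness. Your write-up is in fact more explicit than the paper's, which states the Schur complement identification and the semidefiniteness of the top-left block without further elaboration.
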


\begin{proof}

 The assertion follows from the fact that

$$
\hat Y-P^T\left(\mathcal{B}_0+ \sum_{j=1}^n \gamma_j \mathcal{B}_j\right)^{-1} P \quad \textrm{ is the Schur complement of } \quad\left(\begin{array}{cc}
\left(\mathcal{B}_0+\sum_{j=1}^n \gamma_j \mathcal{B}_j \right) & P \\
P^T & \hat Y
\end{array}\right)
$$
and that $\left(\mathcal{B}_0 + \sum_{j=1}^n \gamma_j\mathcal{B}_j \right) \succeq 0$.
\end{proof}

So the convex Problem
\begin{equation*}
\texttt{minimize} ~ \sum_{i=1}^n \gamma_i~ \textrm{subject to} ~ \gamma \in [a,b]^n, ~ \mathcal{F}(\gamma) \preceq \hat Y
\end{equation*}
can be rewritten as
\begin{equation}\label{lineareqn}
\texttt{minimize} ~ \sum_{i=1}^n \gamma_i~ \textrm{subject to} ~ \gamma \in [a,b]^n, ~ \gamma_1 B_1+\ldots+\gamma_n B_n \succeq -Y.
\end{equation}

Problem \cref{lineareqn} can now be solved using standard semidefinite programming methods. Notably, we have established a formal equivalence, meaning that solving the discretized inverse problem \cref{discreteinverseproblem} is mathematically equivalent to solving the corresponding semidefinite program. Consequently, the solutions obtained from both formulations are identical.

\begin{figure}[h]
\begin{center}
\includegraphics[width=0.6\textwidth]{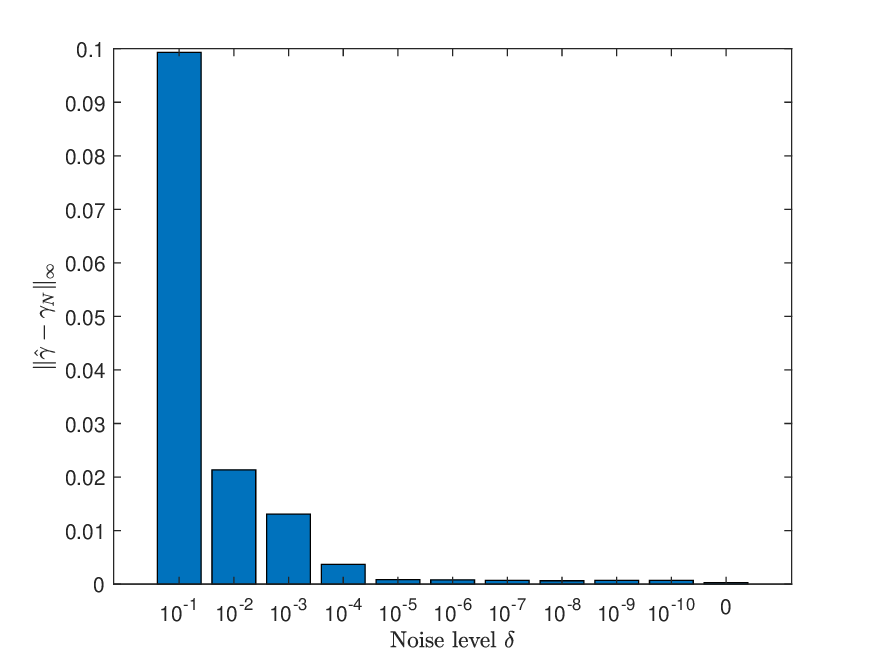}\\
\caption{Reconstruction results using the equivalent semidefinite program for a resolution dimension $ n=20$, bounds $a=1$, $b=3$, number of electrodes $ m=30$ and noise levels $\delta= 10^{-10}, 10^{-1},....,10^{-1},0$.}\label{SDPreconstruction}
\end{center}
\end{figure}

\begin{exmp}\label{example}
For the example of a small circle in a large circle, as depicted in Figure \ref{fig44}, with a resolution dimension of $ n=20$ and bounds $a=1$, $b=3$, the stability constant for $ m=30$ electrodes, computed according to Theorem \ref{uniqshunt}, satisfies $ \lambda \leq 1.3\cdot 10^{-14}$. Similarly, for Theorem \ref{mainsemidefishunt}, we obtain $\lambda \leq 1.2\cdot 10^{-15}$. Despite this stability constant being close to machine precision, the semidefinite program \cref{lineareqn} with $ d=1081$ elements successfully reconstructs the parameter $\hat \gamma$ with a precision of $ \Vert \hat\gamma - \gamma_N\Vert_\infty \leq 3\cdot 10^{-4} $. Even when random noise is added to the measurement data $\hat Y$, the perturbed problem
\begin{equation*}
\texttt{minimize} ~ \sum_{i=1}^n \gamma_i \quad \textrm{subject to} \quad \gamma \in [a,b]^n, ~ \mathcal{F}(\gamma) \preceq Y^\delta +\delta I,
\end{equation*}
with a noise level of $ \Vert\hat Y - Y^\delta\Vert_2 \leq \delta $, maintains an accuracy that is far better than the error estimate provided by Theorem \ref{mainsemidefishunt}, as depicted in Figure \ref{SDPreconstruction}.
\end{exmp}

\section{Conclusion}
\label{sec:conclusions}

This work presents the first explicit characterization of the required number of electrodes necessary to guarantee a desired resolution in reconstructing an unknown Robin transmission coefficient. Previous results have primarily been limited to theoretical uniqueness proofs, whereas our approach provides a computable criterion that ensures both uniqueness and global convergence in a nonlinear inverse problem.

By translating recent advances in global reconstruction techniques to the realistic electrode model, we derive an explicit and verifiable criterion for unique solvability. Once a suitable geometry has been chosen, with electrodes of fixed sizes and positions placed at the boundary, the forward problem can be simulated. By performing a finite number of forward evaluations, Conditions \cref{criterion1} and \cref{criterion2} can be verified numerically by calculating a value $\lambda \in \mathbb{R}$ and checking if it is positive. If $\lambda > 0$, the problem can be rewritten as a uniquely solvable convex semidefinite program, where the stability constant of the inverse problem is given by $1/\lambda$. In cases where the conditions are not satisfied (i.e., $\lambda \leq 0$), additional electrodes may be applied, and the procedure can be repeated until a sufficient number of electrodes is reached.

The numerical experiments in Figures \ref{fig:example2}, \ref{fig24} and \ref{fig34} confirm that the proposed criteria are satisfied when a sufficient number of electrodes is present. While the parameter $\lambda$ reaches machine precision at relatively low resolutions, as shown in Figure \ref{fig34}, the reconstruction remains stable even at significantly higher resolutions, as demonstrated in Example \ref{example}. This stability can be attributed to the self-regularizing nature of the minimization problem, which involves minimizing $\sum_{i=1}^n \gamma_i$, that is the $l_1$-Norm of $ \gamma $, given that $\gamma$ is positive. Moreover, further refinements may be achievable, as the given criteria provide only an upper bound for the stability constant.

Despite successfully finding an equivalent formulation of the nonlinear inverse problem as a semidefinite problem, the fundamental ill-posedness remains a major challenge. Future research could focus on improving the stability bounds, optimizing electrode configurations, and extending the approach to the Calderón problem.

\section*{References}

%\bibliographystyle{plain}
%\bibliography{references2}

\begin{thebibliography}{10}

\bibitem{adler2021electrical}
Andy Adler and David Holder.
\newblock {\em Electrical impedance tomography: methods, history and
  applications}.
\newblock CRC Press, 2021.

\bibitem{alberti2019calderon}
Giovanni~S Alberti and Matteo Santacesaria.
\newblock Calderón’s inverse problem with a finite number of measurements.
\newblock In {\em Forum of mathematics, sigma}, volume~7, page e35. Cambridge
  University Press, 2019.

\bibitem{alberti2022infinite}
Giovanni~S Alberti and Matteo Santacesaria.
\newblock Infinite-dimensional inverse problems with finite measurements.
\newblock {\em Archive for Rational Mechanics and Analysis}, 243:1--31, 2022.

\bibitem{alessandrini2005lipschitz}
Giovanni Alessandrini and Sergio Vessella.
\newblock Lipschitz stability for the inverse conductivity problem.
\newblock {\em Advances in Applied Mathematics}, 35(2):207--241, 2005.

\bibitem{calderon2006inverse}
Alberto~P Calderón.
\newblock On an inverse boundary value problem.
\newblock {\em Computational \& Applied Mathematics}, 25:133--138, 2006.

\bibitem{caro2016global}
Pedro Caro and Keith~M Rogers.
\newblock Global uniqueness for the calderón problem with lipschitz
  conductivities.
\newblock In {\em Forum of Mathematics, Pi}, volume~4, page~e2. Cambridge
  University Press, 2016.

\bibitem{cheney1999electrical}
Margaret Cheney, David Isaacson, and Jonathan~C Newell.
\newblock Electrical impedance tomography.
\newblock {\em SIAM review}, 41(1):85--101, 1999.

\bibitem{hanke2011justification}
Martin Hanke, Bastian Harrach, and Nuutti Hyv{\"o}nen.
\newblock Justification of point electrode models in electrical impedance
  tomography.
\newblock {\em Mathematical Models and Methods in Applied Sciences},
  21(06):1395--1413, 2011.

\bibitem{harrach2015interpolation}
Bastian Harrach.
\newblock Interpolation of missing electrode data in electrical impedance
  tomography.
\newblock {\em Inverse Problems}, 31(11):115008, 2015.

\bibitem{harrach2019uniqueness}
Bastian Harrach.
\newblock Uniqueness and lipschitz stability in electrical impedance tomography
  with finitely many electrodes.
\newblock {\em Inverse problems}, 35(2):024005, 2019.

\bibitem{Harrach_2020uniqueness}
Bastian Harrach.
\newblock Uniqueness, stability and global convergence for a discrete inverse
  elliptic robin transmission problem.
\newblock {\em Numerische Mathematik}, 147(1):29–70, November 2020.

\bibitem{harrach2021introduction}
Bastian Harrach.
\newblock An introduction to finite element methods for inverse coefficient
  problems in elliptic pdes.
\newblock {\em Jahresbericht der Deutschen Mathematiker-Vereinigung},
  123(3):183--210, 2021.

\bibitem{harrach2022solving}
Bastian Harrach.
\newblock Solving an inverse elliptic coefficient problem by convex non-linear
  semidefinite programming.
\newblock {\em Optimization Letters}, 16(5):1599--1609, 2022.

\bibitem{harrach2023calderon}
Bastian Harrach.
\newblock The calderón problem with finitely many unknowns is equivalent to
  convex semidefinite optimization.
\newblock {\em SIAM Journal on Mathematical Analysis}, 55(5):5666--5684, 2023.

\bibitem{harrach2019global}
Bastian Harrach and Houcine Meftahi.
\newblock Global uniqueness and lipschitz-stability for the inverse robin
  transmission problem.
\newblock {\em SIAM Journal on Applied Mathematics}, 79(2):525--550, 2019.

\bibitem{hauptmann2017approximation}
Andreas Hauptmann.
\newblock Approximation of full-boundary data from partial-boundary electrode
  measurements.
\newblock {\em Inverse Problems}, 33(12):125017, 2017.

\bibitem{hyvonen2004complete}
Nuutti Hyv{\"o}nen.
\newblock Complete electrode model of electrical impedance tomography:
  Approximation properties and characterization of inclusions.
\newblock {\em SIAM Journal on Applied Mathematics}, 64(3):902--931, 2004.

\bibitem{hyvonen2009approximating}
Nuutti Hyv{\"o}nen.
\newblock Approximating idealized boundary data of electric impedance
  tomography by electrode measurements.
\newblock {\em Mathematical Models and Methods in Applied Sciences},
  19(07):1185--1202, 2009.

\bibitem{hyvonen2017smoothened}
Nuutti Hyvonen and Lauri Mustonen.
\newblock Smoothened complete electrode model.
\newblock {\em SIAM Journal on Applied Mathematics}, 77(6):2250--2271, 2017.

\bibitem{krupchyk2016calderon}
Katya Krupchyk and Gunther Uhlmann.
\newblock The calderón problem with partial data for conductivities with 3/2
  derivatives.
\newblock {\em Communications in Mathematical Physics}, 348:185--219, 2016.

\bibitem{ruland2018lipschitz}
Angkana R{\"u}land and Eva Sincich.
\newblock Lipschitz stability for the finite dimensional fractional calderón
  problem with finite cauchy data.
\newblock {\em American Institute of Mathematical Sciences}, 13(05):1023--1044,
  2019.

\bibitem{somersalo1992existence}
Erkki Somersalo, Margaret Cheney, and David Isaacson.
\newblock Existence and uniqueness for electrode models for electric current
  computed tomography.
\newblock {\em SIAM Journal on Applied Mathematics}, 52(4):1023--1040, 1992.

\end{thebibliography}

\end{document}